\documentclass[12pt,a4paper]{amsart}
\usepackage[utf8]{inputenc}
\usepackage[T1]{fontenc}
\usepackage{amsmath,amssymb,amsthm,amsfonts,amsopn}

\usepackage[numeric, abbrev, nobysame]{amsrefs}

\usepackage{mathtools}
\usepackage{physics}
\usepackage{tikz-cd}
\usepackage{booktabs}
\usepackage{array}
\usepackage{cancel}
\usepackage{geometry}
\usepackage{hyperref}

\newcounter{theorem}

\newtheorem{thm}[theorem]{Theorem}
\newtheorem*{thm*}{Theorem}

\newtheorem{prop}[theorem]{Proposition}

\newtheorem*{lemma*}{Lemma}
\newtheorem*{prop*}{Proposition}
\newtheorem*{cor*}{Corollary}

\theoremstyle{definition}

\newtheorem*{definition*}{Definition}

\newtheorem*{example*}{Example}
\theoremstyle{remark}

\newtheorem*{remark*}{Remark}

\newcommand{\ee}{\mathrm{e}}

\newcommand{\DD}{\mathrm{D}}

\newcommand{\CC}{\mathbb{C}}

\newcommand{\RR}{\mathbb{R}}

\DeclareMathOperator{\Alt}{Alt}

\DeclareMathOperator{\im}{im}

\DeclareMathOperator{\rgrad}{grad}
\DeclareMathOperator{\rcurl}{curl}

\let\phi\varphi

\title[The BGG construction]{The Bernstein-Gelfand-Gelfand (BGG) Construction:\\ Algebra, Geometry, and Analysis\\ Part I}
\author{Andreas \v Cap}
\date{June 2026}

\begin{document}

\maketitle

These are rough lecture notes for the first part of the short course ``The Bernstein-Gelfand-Gelfand (BGG) Construction: Algebra, Geometry, and Analysis'' that I gave in the framework of the ESI thematic program ``\href{https://www.esi.ac.at/events/e585/}{Differential Complexes: Theory, Discretization, and Applications}''. Recordings of the lectures are available via the \href{https://www.youtube.com/playlist?list=PLjvY6sC_0lhLXZlh5s0W8KReeCLlxYIV1}{youtube channel of the ESI}. Sincere thanks go to Yuechen Zhu, who, with a little help from his friend Claude, created a first version of this text from pictures of the blackboards taken during the talk and gave it a first editing. 

Some initial remarks
\begin{itemize}
    \item My talk will not address everything that goes under the name BGG in the applied math community. I will only discuss the constructions that have a background in representation theory. 
    \item I will focus on general constructions and not on individual examples (for which the general arguments often would be more complicated than necessary). 
    \item I will mainly work in a smooth setting, functional analytic aspects are only addressed in remarks, the question of discretization will not be touched at all. 
\end{itemize}

\section{Differential forms on $\RR^n$}\label{1}

We will start with a formal approach to differential forms on open domains in $\RR^n$. While not my favorite approach, it is correct and should be not to far to versions inspired from vector calculus. At the same point, I will try to avoid interpretations that I would view as being questionable or misleading.   

\subsection{Setup and notation}

For an index set $I = \{i_1 < \cdots < i_k\} \subset \{1, \ldots, n\}$, introduce a symbol $\dd x^I$ with the convention that $\dd x^\emptyset$ is identified with the constant function $1$. For an open subset $U\subset\RR^n$, a \textit{differential form} on $U$ is a (formal) sum $\omega=\sum_{I}\omega_I\dd x^I$, where the $\omega_I:U\to \RR$ are smooth functions. We will mainly consider the case that in such a sum there are only terms for which $|I|=k$ for some $0\leq k\leq n$, in which we call $\omega$ a \textit{$k$-form} on $U$. The space of $k$-forms is denoted by $\Omega^k(U)$, and $\Omega^*(U) \coloneqq \bigoplus_{k=0}^n \Omega^k(U)$. Each $\Omega^k(U)$ is a vector space under obvious operations. Moreover, $k$-forms can be multiplied by smooth functions via $f\omega = \sum_I (f\omega_I) \, \dd x^I$ for $f\in C^\infty(U,\RR)$ and $\omega=\sum_I\omega_I\dd x^I$.

\subsection{The wedge product}

The wedge product on basis elements is defined by 
\[
\dd x^I \wedge \dd x^J \coloneqq
\begin{cases}
0, & I \cap J \neq \emptyset, \\
s(I, J) \, \dd x^{I \cup J}, & \text{otherwise}.
\end{cases}
\]
Here $s(I, J) \in \{+1, -1\}$ is the sign of the permutation that brings the concatenation $(i_1,\dots,i_k,j_1,\dots j_\ell)$ into increasing order. The wedge product of general differential forms is then defined by 

\[
\omega \wedge \tau \coloneqq \sum_{I, J} (\omega_I \tau_J) \, \dd x^I \wedge \dd x^J.
\]
Of course, working this out explicitly needs a bit of work, since only terms with $I\cap J\neq\emptyset$ provide a non-zero contribution and $\dd x^K$ can be written as $\dd x^I\wedge\dd x^J$ in different ways. Hence we have defined an operation $\wedge \colon \Omega^*(U) \times \Omega^*(U) \to \Omega^*(U)$. One immediately verifies that this is associative on the $\dd x^I$, which immediately implies associativity on general forms. Note that the definition implies that for $I=\{i_1<i_2<\dots<i_k\}$, we get $\dd x^I=\dd x^{i_1}\wedge\dots\wedge\dd x^{i_k}$. 

A moment of thought further shows that for $|I|=k$ and $|J|=\ell$, we get $\dd x^J\wedge\dd x^I=(-1)^{k\ell}\dd x^I\wedge\dd x^J$, so in particular $\dd x^i\wedge\dd x^j=-\dd x^j\wedge\dd x^i$. This immediately extends to general forms, so for $\omega \in \Omega^k(U)$ and $\tau \in \Omega^\ell(U)$ we get 
\[
\tau \wedge \omega = (-1)^{k\ell} \, \omega \wedge \tau.
\]
This property is referred to as \textit{graded commutativity}. 

\subsection{The Exterior Derivative}

For $f \in C^\infty(U, \RR)$, define $\dd f\in\Omega^1(U)$ by $\dd f \coloneqq \sum_{i=1}^n \frac{\partial f}{\partial x^i} \, \dd x^i$. For a general form $\omega = \sum_I \omega_I \, \dd x^I$, we then define 
\begin{equation}\label{d-def} 
   \dd \omega \coloneqq \sum_I \dd \omega_I \wedge \dd x^I = \sum_{I, j} \frac{\partial \omega_I}{\partial x^j} \, \dd x^j \wedge \dd x^I. 
\end{equation}

Note in particular that this definition implies $d(\dd x^I)=0$ for any $I$ and that $d(\Omega^k(U))\subset\Omega^{k+1}(U)$. Expanding this, one again has to take into account that only terms with $j\notin I$ give a non-zero contribution and that $I\cup \{j\}$ can be obtained in several ways. See below for an explicit example.   

From this definition, the key properties of $d$ can be obtained quickly. We first observe that for $f\in C^\infty(U,\RR)$ we can expand the definition as 
$$d(df)=\sum_{i,j}\frac{\partial^2f}{\partial x^j\partial x^i}\dd x^i\wedge \dd x^j=\sum_{i<j}(\frac{\partial^2f}{\partial x^j\partial x^i}-\frac{\partial^2f}{\partial x^i\partial x^j})\dd x^i\wedge\dd x^j=0,$$
by symmetry of second partials and skew symmetry of $\wedge$. 

Next, we analyze compatibility with multiplication by functions:
\[
\begin{aligned}
\dd(f\omega) &= \sum_{I,j} \frac{\partial(f\omega_I)}{\partial x^j} \, \dd x^j \wedge \dd x^I\\
&= \sum_{I,j} \frac{\partial f}{\partial x^j} \omega_I \, \dd x^j \wedge \dd x^I + \sum_{I,j} f \frac{\partial \omega_I}{\partial x^j} \, \dd x^j \wedge \dd x^I\\
&= \dd f \wedge \omega + f \, \dd \omega.
\end{aligned}
\]

This readily generalizes to compatibility with the wedge product. For $\omega \in \Omega^k(U)$ and $\tau\in\Omega^*(U)$, we get $\dd (\omega_I\tau_J)=\tau_J\dd \omega_I+\omega_I\dd \tau_j$. Multiplying this by $\dd x^I\wedge\dd x^J$, we obtain $\tau_J\dd \omega_i\wedge\dd x^I\wedge\dd x^J+(-1)^k\omega_I\dd x^I\wedge \dd\tau_J\wedge \dd x_j$. This easily implies the \textit{graded Leibniz rule}
\begin{equation}\label{Leibniz}
    \dd(\omega \wedge \tau) = \dd\omega \wedge \tau + (-1)^k \omega \wedge \dd\tau.
\end{equation}

This in turn shows that $\dd(\dd \omega_I\wedge\dd x^I)=0$, which shows that $\dd\circ\dd=0$ holds in general (\textit{Nilpotency} of $\dd$). 

\subsection{The de Rham complex}

Nilpotency shows that 
\[
0 \longrightarrow \Omega^0(U) \xrightarrow{\dd_0} \Omega^1(U) \xrightarrow{\dd_1} \cdots \xrightarrow{\dd_{n-1}} \Omega^n(U) \longrightarrow 0
\]
is a \textit{complex}, i.e.\ that $\im \dd_{k-1} \subset \ker \dd_k$.

\textbf{Example: $n=3$.} For $U\subset\RR^3$ open, we have $\Omega^0(U)=C^{\infty}(U,\RR)$ and we can identify $\Omega^3(U)$ with $C^\infty(U,\RR)$ by mapping $f:U\to\RR$ to $f\dd x^1\wedge\dd x^2\wedge\dd x^3$. We also identify $\Omega^1(U)$ with $C^{\infty}(U,\RR^3)$ by sending $(f_1,f_2,f_3)$ to $f_1\dd x^1+f_2\dd x^2+f_3\dd x ^3$. Finally, slightly less obvious, we identify $\Omega^2(U)$ with $C^{\infty}(U,\RR^3)$ by sending $(g_1,g_2,g_3)$ to $g_1\dd x^2\wedge\dd x^3-g_2\dd x^1\wedge \dd x^3+g_3\dd x^1\wedge\dd x^2$. Under this identification, the de Rham complex of $U$ becomes 
\[
C^\infty(U, \RR) \xrightarrow{\rgrad} C^\infty(U, \RR^3) \xrightarrow{\rcurl} C^\infty(U, \RR^3) \xrightarrow{\operatorname{div}} C^\infty(U, \RR).
\]
The identities $\rcurl \circ \rgrad = 0$ and $\operatorname{div} \circ \rcurl = 0$ are instances of $\dd^2 = 0$.

To interpret this, we should view the coordinate forms occurring in the identifications as defining a \textit{frame} for differential forms of various degrees. This just means that general forms can be uniquely written as linear combinations of the frame elements in which the coefficients are smooth functions. Using these frames, the operators of vector calculus occur as expressions for $\dd$ as acting on (triples of) functions. What I definitely \textit{not} want to do is for example view $\rgrad (f)(x)$ as a vector which, if non-zero, is perpendicular to the level set of $f$. The point is that this may be expressed in a much better way, which is independent of coordinates, i.e.\ invariant under local diffeomorphisms. In contrast, the interpretation as a normal to the level set is not even invariant under general linear changes of coordinates. As we shall see below, there also are nice frames consisting of non-constant forms, in which the exterior derivative looks completely different and this leads to interesting applications. 

\section{de Rham cohomology}

I next want to make a few remarks on de Rham cohomology. These are by no means intended to provide a serious introduction to the topic. I mainly want to point out connections to other areas of mathematics which might be helpful to follow up for some applications. 

\begin{definition*}
The $k$th \textbf{de Rham cohomology group} is the quotient vector space
\[
H^k(U) \coloneqq \ker(\dd_k) / \im(\dd_{k-1}).
\]
\end{definition*}

In practice, this just means that one considers equivalence classes $[\omega]$ where $\omega\in\Omega^k(U)$ satisfies $\dd \omega=0$ and $[\omega]=[\hat\omega]$ if and only if there is a form $\tau\in\Omega^{k-1}(U)$ such that $\hat\omega=\omega+\dd \tau$. These classes can be added and multiplied by scalars via  $[\omega_1] + [\omega_2] = [\omega_1 + \omega_2]$, $\alpha[\omega] = [\alpha\omega]$ for $\alpha \in \RR$, so they form a vector space $H^k(U)$.

\subsection{Topological significance}
The de Rham theorem shows that the cohomology spaces $H^k(U)$ coincide with the singular cohomology of $U$ with coefficients in $\RR$. Singular cohomology is defined for very general topological spaces, in sufficiently nice situations there are many other ways to compute it, for example via simplicial cohomology, \v Cech cohomology or via CW-decompositions. (I do not think that following up proofs for this fact is very illuminating, the most illuminating approach probably is via sheaf cohomology. It may be worthwhile, though, to get a bit of exposure to the very robust nature of the methods of algebraic topology.) 

In particular, this implies that $H^k(U)$ is a topological invariant of $U$. The main statement however is that it is a \textit{homotopy invariant}, so it is even unchanged under continuous deformations of $U$. For example, a full torus in $\RR^3$ can be continuously deformed to a circle, which together with a bit of knowledge on algebraic topology allows one to immediately read off its cohomology. Similarly, taking the complement of a small ball in a larger ball, the resulting space can be continuously deformed to the boundary sphere and hence is homotopy equivalent to the sphere $S^2$. Again, this immediately allows one to read off the cohomology. 

Apart from several tools to compute cohomologies, algebraic topology also provides many general results. For example, it is a general result that the cohomology spaces of any compact smooth manifold are finite dimensional. There are similar results for Lipchitz domains in $\RR^n$. Finite dimensional cohomology can have immediate analytic consequences (closed range, Poincar\'{e}-type inequalities, etc.). Hence it may be very valuable to have control on the cohomology of a complex. This provides important motivation for the BGG construction, which can be viewed as constructing complexes out of complexes in a way that does not change the cohomology.  

\section{The Rumin complex on domains in $\RR^3$}

This actually is an example of a BGG construction based on representation theory, but coming from a different kind of geometric structure (a contact structure). It should not be compared to other BGG constructions from a technical point of view, but I hope it illustrates very nicely, how a reduction of a complex that does not change its cohomology may be obtained. All this only needs the formal approach to differential forms discussed in Section\ref{1}. The basic approach here is to rewrite the de Rham complex on $U\subset\RR^3$ in terms of functions and triples of functions but using a different frame for differential forms than the one that leads to vector proxies. 

\subsection{The de Rham complex in a different frame}

We start with the $1$-form $\alpha\in\Omega^1(\RR^3)$ defined by 
\[
\alpha \coloneqq \dd x^3 + x^1 \, \dd x^2.
\]
The definition of the exterior derivative readily shows that $\dd\alpha=\dd x^1\wedge\dd x^2$ and definition of the wedge product then shows that 
\[
\alpha \wedge \dd\alpha = \dd \alpha\wedge\alpha= \dd x^1 \wedge \dd x^2 \wedge \dd x^3.
\]
This is nowhere vanishing, which is the defining property of a \textit{contact form}. 

Now we use this form to defined a frame for differential forms on any open domain $U\subset\RR^3$, basically by ``replacing $\dd x^3$ by $\alpha$''. Indeed, one immediately verifies that any $1$-form on $U$ can be uniquely written as $f_1\dd x^1+f_2\dd x^2+f_3\alpha$ for smooth functions $f_1,f_2,f_3:U\to\RR$. As a major example, writing $\partial_i$ for the $i$th partial derivative
\[
\begin{aligned}
\dd f &= \partial_1 f \, \dd x^1 + \partial_2 f \, \dd x^2 + \partial_3 f \, \dd x^3\\
&= \partial_1 f \, \dd x^1 + (\partial_2 f - x_1 \partial_3 f) \, \dd x^2 + \partial_3 f \, \alpha.
\end{aligned}
\]
The combination $\partial_2 f - x_1 \partial_3 f$ will occur frequently in what follows, so we will denote it by $\tilde{\partial}_2 f$ in what follows. So in particular, identifying $\Omega^1(U)$ with $C^\infty(U,\RR^3)$ via our new frame elements the exterior derivative looks like a modified gradient in which $\partial_2$ is replaced by $\tilde{\partial}_2$. 

The result on $1$-forms readily extends to higher degrees, so we conclude that forms on $U$ admit unique expansions with smooth coefficients in terms of the following frame elements:
\[
\begin{array}{c|c|c|c}
\Omega^0 & \Omega^1 & \Omega^2 & \Omega^3 \\ \hline
1 & \begin{array}{c}\dd x^1 \\ \dd x^2 \\ \alpha\end{array} & \begin{array}{c}\dd x^2 \wedge \alpha \\ -\dd x^1 \wedge \alpha \\ \dd\alpha\end{array} & \alpha \wedge \dd\alpha
\end{array}
\]
It is also very easy to compute the exterior derivative in degree two in this picture:
$$
\dd(g_1\dd x^2\wedge\alpha-g_2\dd x^1\wedge\alpha+g_3\dd \alpha)=(\partial_1 g_1+\tilde{\partial}_2g_2+\partial_3 g_3)\alpha\wedge\dd \alpha. 
$$
So again this just looks like the divergence up to the fact that $\partial_2$ is replaced by $\tilde{\partial}_2$.  
Things become much more interesting in degree one. The crucial computation for what follows is that  for $f_3 \alpha \in \Omega^1(U)$ we get:
\[
\dd(f_3 \alpha) = \dd f_3 \wedge \alpha + f_3 \, \dd\alpha = \partial_1 f_3 \, \dd x^1 \wedge \alpha + \tilde{\partial}_2 f_3 \, \dd x^2 \wedge \alpha + f_3 \, \dd\alpha
\]
and the key feature is the undifferentiated $f_3$ in the last summand. More generally, one readily verifies that identifying $\Omega^1(U)$ and $\Omega^2(U)$ with $C^\infty(U,\RR^3)$ with the frames defined above, the exterior derivative in degree $1$ is represented by the matrix 
\[
\begin{pmatrix} 0 & -\partial_3 & \tilde{\partial}_2 \\ \partial_3 & 0 & -\partial_1 \\ -\tilde{\partial}_2 & \partial_1 & 1 \end{pmatrix},
\]
Again this looks similar to a curl, but in addition to changing $\partial_2$ to $\tilde{\partial}_2$, we now have the $1$ in the lower right corner, which creates a point-wise (tensorial) component. Otherwise put, we have obtained the following complex whose cohomology coincides with the de Rham cohomology of $U$:

\begin{equation}\label{deRham}
\begin{tikzcd}[ampersand replacement=\&]
C^\infty(U, \RR) \arrow[r, "{\left(\begin{smallmatrix}\partial_1\\ \tilde{\partial}_2\\ \partial_3\end{smallmatrix}\right)}"] \&
C^\infty(U, \RR^3) \arrow[r, "{\left(\begin{smallmatrix} 0 & -\partial_3 & \tilde{\partial}_2 \\ \partial_3 & 0 & -\partial_1 \\ -\tilde{\partial}_2 & \partial_1 & 1 \end{smallmatrix}\right)}"] \&[5em] C^\infty(U, \RR^3) \arrow[r, "(\partial_1\, \tilde{\partial}_2\, \partial_3)"] \&[2.5em] C^\infty(U, \RR) 
\end{tikzcd}
\end{equation}

\subsection{Reducing to the Rumin complex}
The above formula for $\dd$ in degree one, has two important consequences:
\begin{enumerate}
\item \textbf{Closed 1-forms.} If $\omega\in\Omega^1(U)$ corresponds to $(f_1,f_2,f_3)$ and $\dd\omega = 0$, then $f_3 = \tilde{\partial}_2 f_1 - \partial_1 f_2$.  
\item \textbf{2-form representatives.} For any $\omega \in \Omega^2(U)$, we can find $\tau\in\Omega^1(U)$ such that $\omega+d\tau$ corresponds to a triple of functions with last component zero. Explicitly, if $\omega$ corresponds to $(g_1,g_2,g_3)$, then $\tau=-g_3\alpha$ does the job. 
\end{enumerate}

The first observation suggests that in degree $1$, one might forget the third component, without losing cohomological content. So we are led to considering the map $\pi:C^\infty(U,\RR^3)\to C^\infty(U,\RR^2)$, $\pi(f_1,f_2,f_3):=(f_1,f_2)$. This also suggests to define $D_0:C^\infty(U,\RR)\to C^\infty(U,\RR^2)$ by $D_0(f):=\pi(df)=(\partial_1f,\tilde{\partial}_2f)$. The second observation suggests that in degree two, all the cohomological content may be contained in triples with vanishing third component. This suggests that we consider the map $i:C^\infty(U,\RR^2)\to C^\infty(U,\RR^3)$ defined by $i(g_1,g_2):=(g_1,g_2,0)$. This in turn leads to an operator $D_2:C^\infty(U,\RR^2)\to C^\infty(U,\RR)$ by composing the modified divergence with $i$, so $D_2(g_1,g_2)=\partial_1g_1+\tilde{\partial}_2g_2$.  

This is not yet sufficient to obtain an operator $D_1$ in degree one, but we can make one more observation. In degree one, we can define an operator $L:C^{\infty}(U,\RR^2)\to C^\infty(U,\RR^3)$, which adds the only third component that can lead to a closed form, i.e.\ $L(f_1,f_2):=(f_1,f_2,\tilde{\partial}_2 f_1 - \partial_1 f_2)$. This is the model case of what we will call a \textit{splitting operator} later on. The crucial properties of $L$ are very easy to prove:

\begin{enumerate}
    \item $L$ is an injective linear differential operator of first order.
    \item For $\omega\in\Omega^1(U)\cong C^\infty(U,\RR^3)$ we get 
    $$\omega\in\mathcal{R}(L)\iff \omega=L(\pi(\omega)) \iff \dd\omega\in\mathcal{R}(i).$$ 
\end{enumerate}
The only part of this that is not obvious is the last equivalence. But this follows from above since $f_3=\tilde{\partial}_2 f_1 - \partial_1 f_2$ is equivalent to vanishing of the last component of the exterior derivative of the form $\omega$ corresponding to $(f_1,f_2,f_3)$. This has several immediate consequences: 

\begin{enumerate}
    \item Given $(f_1,f_2)\in C^\infty(U,\RR^2)$, there is a unique element $D_1(f_1,f_2)\in C^\infty(U,\RR^2)$ such that $\dd(L(f_1,f_2))=i(D_1(f_1,f_2))$. 
    \item For $f\in C^\infty(U,\RR)$, we get $0=\dd(\dd f)$ and hence $\dd f=L(\pi(\dd f))=L(D_0(f))$. In particular, $\dd f=0$ if and only if $D_0(f)=0$. Likewise, $i(D_1(D_0(f)))=\dd(L(D_0(f)))=\dd(\dd f)=0$ so $D_1\circ D_0=0$.
    \item $D_2(D_1(\sigma))=\dd(i(D_1(\sigma)))=\dd(\dd L(\sigma))=0$. 
\end{enumerate}
Otherwise put, we have a complex 
$$
C^\infty(U,\RR) \overset{D_0}\longrightarrow C^\infty(U,\RR^2) \overset{D_1}\longrightarrow C^\infty(U,\RR^2) \overset{D_2}\longrightarrow C^\infty(U,\RR), 
$$
and the maps $(\mathrm{id},L,i,\mathrm{id})$ defined a chain map to the de Rham complex viewed as \eqref{deRham}. While this is not needed in what follows, the explicit formula for $D_1$ can be easily computed to be
\[
D_1 = \begin{pmatrix} \tilde{\partial}_2^2 & -\partial_3 - \tilde{\partial}_2 \partial_1 \\ \partial_3 - \partial_1 \tilde{\partial}_2 & \partial_1^2 \end{pmatrix},
\]
We have already seen above that $\dd f=0\iff D_0(f)=0$ which implies that our chain map induces an isomorphism in cohomology in degree zero. In degree one, we observe that $\dd \omega=0$ implies $\omega=L(\pi(\omega))$ and then $i(D_1(\pi(\omega)))=\dd \omega=0$. This shows that in cohomology, we get $[\omega]=[L(\pi(\omega))]$ so the induced map in first cohomology is surjective. To prove injectivity, we take $\sigma\in C^\infty(U,\RR^2)$ such that $D_1(\sigma)=0$ and assume that $L(\sigma)=\dd f$. Then we get $\sigma=\pi(L(\sigma))=\pi(\dd f)=D_0(f)$, and we get an isomorphism in cohomology in degree one.

In degree two, we have already observed that for $\omega\in\Omega^2(U)$, we find $\tau\in\Omega^1(U)$ such that $\hat\omega=\omega+\dd\tau=i(\sigma)$ for some $\sigma\in C^\infty(U,\RR^2)$. If $\dd \omega=0$, then $0=\dd\hat\omega=\dd i(\sigma)=D_2(\sigma)$ and $[\omega]=[\hat\omega]=[i(\sigma)]$ so we get a surjection in cohomology in degree two. On the other hand, let us start from $\sigma$ such that $D_2(\sigma)=0$ and assume that $i(\sigma)=\dd\tau$ for some $\tau\in\Omega^1(U)$. Then we conclude that $\tau=L(\pi(\tau))$ and hence $i(\sigma)=\dd L(\pi(\tau))=i(D_1(\pi(\tau)))$. Thus $\sigma=D_1(\pi(\tau))$ and the map in cohomology is injective.

In degree three, we have to prove that $\mathcal R(\dd)=\mathcal R(D_2)=\mathcal R(\dd\circ i)$ to show that we get an isomorphism in cohomology. But for $\omega\in\Omega^2(U)$, we can find $\tau\in\Omega^1(U)$ such that $\omega+d\tau\in\mathcal{R}(i)$ and writing $\omega+\dd\tau=i(\sigma)$ we get $D_2(\sigma)=\dd(i(\sigma))=d\omega$. This completes the argument that the Rumin complex computes the de Rham cohomology. 

At this point the whole construction may look rather unmotivated and it should really serve mainly to illustrate why getting some tensorial content into the picture leads to reductions of the complex. To understand the Rumin complex more conceptually, the main ingredient actually is the family of subspace in $\RR^3$ defined as $H_x:=\mathcal N(\alpha(x))$ for $x\in U$. This should be viewed as prescribing, for each point $x\in U$, a plane in the tangent space of $U$ at $x$. The family $\{H_x:x\in U\}$ is called a \textit{contact structure} on $U$. The spaces $C^\infty(U,\RR^2)$ that occur in the complex should (in the spirit of the explanation for differential forms below) be viewed differently in degree one and two. In degree one, the interpretation is a assigning to each $x\in U$ a linear map $H_x\to\RR$ (depending smoothly on $x$). This also shows that there is a natural map from $\Omega^1(U)$ to this space defined by restricting the linear maps $\RR^3\to\RR$ defined by a differential form to to planes $H_x$. In degree two, one should view it as the subspace of $\Omega^2(U)$ consisting of those forms $\omega$, for which in each point $x$, $\omega(x):\RR^3\to\RR$ vanishes if both of its entries lie in $H_x$. 

\section{Geometric interpretation of differential forms}

\subsection{Interpretation via multilinear alternating maps}

We start by interpreting $dx^i$ for $i=1,\dots,n$ as the linear map $\RR^n\to\RR$ which maps a vector $v=(v^1,\dots,v^n)$ to its $i$th component $v^i$. Then any linear map $\RR^n\to\RR$ can be uniquely written as $\sum_{i=1}^na_idx^i$ for $a_1,\dots,a_n\in\RR$. Hence we see that a we can interpret a one-form $\omega\in\Omega^1(U)$, $\omega=\sum\omega_idx^i$ as a smooth map $U\to \RR^{n*}:=L(\RR^n,\RR)$, the space of linear maps from $\RR^n$ to $\RR$. Evidently any smooth map of this type in turn defines a $1$-form on $U$. Of course, one can alternatively view a smooth map $\omega:U\to\RR^{n*}$ as a smooth map $U\times\RR^n\to\RR$, which is linear in the second variable. The correspondence is just given by $(x,v)\mapsto \omega(x)(v)$ and we will frequently switch between the two points of view.   

This generalizes to forms of higher degree replacing $\RR^{n*}$ by the space $\Lambda^k\RR^{n*}$ of $k$-linear, alternating maps from $(\RR^n)^k\to\RR$. Such a map associates to $k$-vectors $v_1,\dots,v_k\in\RR^n$ a number $\phi(v_1,\dots,v_k)$ in a way that is linear in each entry and changes sign if two entries are exchanged. We first interpret $\dd x^I$ with $I = \{i_1 < \cdots < i_k\}$ as the map in $\Lambda^k\RR^{n*}$ defined by 
\[
(v_1, \ldots, v_k)\mapsto \det \begin{pmatrix} v_1^{i_1} & \cdots & v_k^{i_1} \\ \vdots & & \vdots \\ v_1^{i_k} & \cdots & v_k^{i_k} \end{pmatrix}.
\]
It is easy to see that these maps for all $I$ with $|I|=k$ form a basis for $\Lambda^k\RR^{n*}$, so as above we see that we can interpret $\omega\in\Omega^k(U)$ as a smooth map $U\to\Lambda^k\RR^{n*}$ and any such map comes from a $k$-form. Similarly as a above, we can equivalently view $\omega$ as defining a smooth map $U\times\RR^n\times\dots\times\RR^n\to\RR$, which is $k$-linear and alternating in the last $k$ arguments. 

\textbf{Example} Consider $\dd f\in\Omega^1(U)$ for a smooth function $f:U\to\RR$. Then the definitions imply that $\dd f$ coincides with the Fr\'{e}chet derivative:
\[
\dd f(x, v) = \sum_i (\partial_i f)(x) \, v_i = \DD f(x) \cdot v.
\]

At a point $x$ where $\dd f(x) \neq 0$, the kernel $\ker (\dd f(x)) \subset \RR^n$ is a hyperplane. By the implicit function theorem, the level set $\{y : f(y) = f(x)\}$ is locally a smooth $(n-1)$-dimensional submanifold, and $\ker(\dd f(x))$ is the tangent hyperplane to this submanifold in the point $x$. This is similar to the gradient as a normal to the level set, but being a submanifold and being tangent to a submanifold in a point is a property preserved by diffeomorphisms (which act on vectors via the derivative). This is not true for being normal to a submanifold (which is only preserved by restrictions of Euclidean motions). 

\subsection{Pullback of differential forms}

The advantage of this geometric picture of differential forms is that it immediately suggests a way act on differential forms with diffeomorphisms and even with general smooth maps. Let $\Phi \colon U \to V$ be smooth, $U \subset \RR^n$, $V \subset \RR^m$. For $\omega \in \Omega^k(V)$, the \textbf{pullback} $\Phi^*\omega \in \Omega^k(U)$ is defined by
\[
(\Phi^*\omega)(x)(v_1, \ldots, v_k) \coloneqq \omega\qty(\Phi(x))\qty(\DD\Phi(x)(v_1), \ldots, \DD\Phi(x)(v_k)).
\]
Here $\DD\Phi(x) \colon \RR^n \to \RR^m$ is the Fr\'{e}chet derivative of $\Phi$ at $x$. On smooth functions, i.e. for $f\in \Omega^0(V)$, this simplifies to $\Phi^* f = f \circ \Phi$. 

Let us consider the special case of the one-forms $dx^i\in\Omega^1(V)$ for $i=1,\dots,m$. We can write $\Phi$ in components as $\Phi = (\Phi^1, \ldots, \Phi^m)$ with each $\Phi^i \in C^\infty(U, \RR)$. Then by definition, $\Phi^*dx^i(x)$ maps a vector $v$ to the $i$th component of $\DD \Phi(x)(v)$ which is exactly $\dd\Phi^i(x)(v)$. Thus we get $\Phi^*dx^i=d\Phi^i$. 

Our next aim is to understand how the operations introduced above are compatible with pullbacks. There is one slightly annoying step that we have to take, namely to express the wedge product in the language of multilinear forms. It turns out that for $\omega\in\Omega^k(U)$, $\tau\in\Omega^\ell(U)$ and $v_1,\dots,v_{k+\ell}\in\RR^n$ we get
\begin{equation}\label{wedge}
    (\omega\wedge\tau)(x)(v_1,\dots,v_{k+\ell})=\frac{1}{k!\ell!}\sum_{\sigma\in\mathfrak{S}_{k+\ell}}\mathrm{sgn}(\sigma)\omega(x)(v_{\sigma_1},\dots,v_{\sigma_k})\cdot\tau(x)(v_{\sigma_{k+1}},\dots,v_{\sigma_{k+\ell}}).
\end{equation}
Here the sum is over all permutations $\sigma$ of the set $\{1,\dots,k+\ell\}$ and one simply multiplies the real numbers produced by the differential forms. From the definitions, we readily conclude that it suffices to verify this for $\omega=dx^I$ and $\tau=dx^J$. This can be sorted out by direct considerations using uniqueness of the determinant function. Once the formula \eqref{wedge} is available it follows immediate from the definition that pullback is compatible with the wedge product, i.e.\ for a smooth function $\Phi$, one gets $\Phi^*(\omega\wedge\tau)=(\Phi^*\omega)\wedge(\Phi^*\tau)$. 

Having this at hand, we can now give a complete proof of \textit{naturality of the exterior derivative} i.e.\ the fact that $d$ commutes with pullbacks. 

\begin{prop*}
     For any smooth map $\Phi \colon U \to V$ and $\omega \in \Omega^k(V)$:
\[
\Phi^*(\dd\omega) = \dd(\Phi^*\omega).
\]
\end{prop*}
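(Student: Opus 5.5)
The plan is to reduce everything to the case of functions, using the algebraic properties already established: linearity of pullback, compatibility of pullback with the wedge product (via \eqref{wedge}), the graded Leibniz rule \eqref{Leibniz}, and nilpotency $\dd\circ\dd=0$. Since both sides of the claimed identity are linear in $\omega$, it suffices to treat $\omega=f\,\dd x^I$ with $f\in C^\infty(V,\RR)$ and $I=\{i_1<\dots<i_k\}$, where $\dd x^I=\dd x^{i_1}\wedge\dots\wedge\dd x^{i_k}$.

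\textbf{Step 1 (functions).} For $f\in\Omega^0(V)$ we need $\Phi^*(\dd f)=\dd(f\circ\Phi)$. Evaluating at $x\in U$ on a vector $v$, the left side is $\dd f(\Phi(x))(\DD\Phi(x)v)=\DD f(\Phi(x))(\DD\Phi(x)v)$, using the example identifying $\dd f$ with the Fr\'echet derivative. The right side is $\DD(f\circ\Phi)(x)(v)$. These agree by the chain rule. This is the only genuinely analytic input, and it is where the geometric interpretation pays off.

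\textbf{Step 2 (coordinate forms).} From the observation $\Phi^*\dd x^i=\dd\Phi^i$ and compatibility with wedge products, we get
\[
\Phi^*\dd x^I=\dd\Phi^{i_1}\wedge\dots\wedge\dd\Phi^{i_k}.
\]
By the graded Leibniz rule and $\dd\circ\dd=0$ applied to each $\Phi^{i_j}$, an induction on $k$ shows that $\dd(\dd\Phi^{i_1}\wedge\dots\wedge\dd\Phi^{i_k})=0$. Hence $\dd(\Phi^*\dd x^I)=0=\Phi^*(\dd(\dd x^I))$, since $\dd(\dd x^I)=0$ by definition.

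\textbf{Step 3 (general case).} Take $\omega=f\,\dd x^I$. Then $\dd\omega=\dd f\wedge\dd x^I$ by definition \eqref{d-def}, so
\[
\Phi^*(\dd\omega)=\Phi^*(\dd f)\wedge\Phi^*\dd x^I=\dd(\Phi^*f)\wedge\Phi^*\dd x^I,
\]
using wedge compatibility and Step 1. On the other side, $\Phi^*\omega=(f\circ\Phi)\,\Phi^*\dd x^I$, because pullback of a product with a function is multiplication by the pulled-back function (a special case of wedge compatibility with $k=0$). The Leibniz rule for multiplication by functions then gives
\[
\dd(\Phi^*\omega)=\dd(f\circ\Phi)\wedge\Phi^*\dd x^I+(f\circ\Phi)\,\dd(\Phi^*\dd x^I),
\]
and the last term vanishes by Step 2. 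The two expressions coincide.

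I expect no serious obstacle: all the heavy lifting (wedge compatibility via \eqref{wedge}) is already available. The point needing slight care is Step 2. There one must make sure the Leibniz rule and nilpotency are applied on $U$ to the functions $\Phi^{i_j}$, not on $V$. One must also note that forms like $\dd\Phi^{i_1}\wedge\dots\wedge\dd\Phi^{i_k}$ are general forms on $U$ rather than coordinate forms, so their derivative must be computed via \eqref{Leibniz} and not directly from the definition.
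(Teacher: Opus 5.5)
Your proposal is correct and follows the paper's proof essentially step for step: the chain rule for functions, then closedness of $\Phi^*\dd x^I=\dd\Phi^{i_1}\wedge\dots\wedge\dd\Phi^{i_k}$ via the Leibniz rule and $\dd\circ\dd=0$, then the Leibniz rule for multiplication by functions to handle $\omega=\sum_I\omega_I\,\dd x^I$. Your explicit induction in Step 2 just spells out what the paper compresses into the phrase ``compatibility with the wedge product''.
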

\begin{proof}
     We proceed in three steps:

\textbf{Step 1.} On functions, 
$$
(\Phi^*\dd f)(x)(v)=\dd f(\Phi(x))(D\Phi(x)(v)) = \dd(f \circ \Phi)(x)(v) = \dd(\Phi^* f)(x)(v)
$$
by the chain rule.

\textbf{Step 2.} Writing $\Phi=(\Phi^1,\dots,\Phi^n)$, we have observed that $\Phi^*\dd x^i=\dd\Phi^i$, so in particular $\dd(\Phi^*dx^i)=0$. Since $\dd x^I=\dd x^{i_1}\wedge\dots\wedge\dd x^{i_k}$, compatibility with the wedge product then implies $\dd(\Phi^*\dd x^I)=0$ for any $I$. 

\textbf{Step 3.} For a general form $\omega = \sum_I \omega_I\, \dd x^I$:
\[
\begin{aligned}
\dd(\Phi^*\omega) &= \sum_I \dd((\omega_I \circ \Phi)\, \Phi^*\dd x^I) \\
&= \sum_I \dd(\omega_I \circ \Phi) \wedge \Phi^*\dd x^I + \cancelto{0}{(\omega_I \circ \Phi)\,\dd(\Phi^*\dd x^I)} \\
&= \sum_I \Phi^*(\dd\omega_I) \wedge \Phi^*\dd x^I = \Phi^*\qty(\sum_I {\dd\omega_I} \wedge \dd x^I) = \Phi^*(\dd\omega). \qquad \square
\end{aligned}
\]
\end{proof}

We can apply this to the special case the $\Phi$ is a diffeomorphisms between open subsets of $\RR^n$, which we can view as a general change of coordinates. This shows that the formula for $\dd$ is the same in any coordinate system, which is the naturality interpretation of the result. Moreover, the fact that $\Phi^*dx^i=d\Phi^i$ allows for very efficient computations, as the following example of \textit{spherical coordinates} shows. Viewing spherical coordinates as being defined by a map $\Phi(r, \varphi, \vartheta)=(x, y, z)$, we for example get $x=\Phi^1 = r\cos\varphi\cos\vartheta$ and hence
\[
\Phi^*\dd x = \cos\varphi\cos\vartheta\, \dd r - r\sin\varphi\cos\vartheta\, \dd\varphi - r\cos\varphi\sin\vartheta\, \dd\vartheta,
\]
and similarly for $\dd y$, $\dd z$. Compatibility with the wedge product then shows how to get the formulae for $\dd x\wedge\dd y$ and so on. 

\section{Differential forms on smooth manifolds}

The naturality result also allows us to extend what we have done for open subsets of $\RR^n$ so far to smooth manifolds. This does not depend heavily on whether we talk about submanifolds of $\RR^N$ or about abstract smooth manifolds. In either case, for each $x\in M$, there is a natural definition of the \textit{tangent space} $T_xM$ to $M$ at $x$, which is a vector space of dimension $\dim(M)$. In the case of a submanifold, this is a linear subspace of $\RR^N$, on abstract manifolds a different construction is needed. One can then define a $k$-form on $M$ as associating to each $x\in M$ an element of $\Lambda^kT^*_xM$, i.e.\ an alternating $k$-linear map $(T_xM)^k\to\RR$. One also requires the this depends smoothly on $x$, which proceeds via vector fields, i.e.\ maps associating to each $x\in M$ a tangent vector $\xi(x)\in T_xM$. For submanifolds one can simply define smoothness of a vector field as smoothness as a map to $\RR^N$, for abstract manifolds, a more elaborate treatment via making the union $TM$ of all tangent spaces into a smooth manifold is required. Given a $k$-form $\omega$ and (local) vector fields $\xi_1,\dots,\xi_k\in\mathfrak{X}(M)$, one obtains a (local) function $\omega(\xi_1,\dots,\xi_k):M\to\RR$ defined by $\omega(\xi_1,\dots,\xi_k)(x):=\omega(x)(\xi_1(x),\dots,\xi_k(x))$. The form $\omega$ is then defined to be smooth if and only if for $\omega(\xi_1,\dots,\xi_k)$ is smooth for any choice of smooth local vector fields $\xi_i$. 

This then defines spaces $\Omega^k(M)$ of smooth $k$-forms on $M$ which are vector spaces under point-wise operations and there is an obvious multiplication by smooth functions. Also \eqref{wedge} can be used as a definition to extend the wedge product to manifolds. One can return to the formal approach via expressing things in local coordinates. Given an open subset $U\subset M$ and a diffeomorphism $u=(u^1,\dots,u^i):U\to u(U)$ onto an open subset $u(U)\subset\RR^n$, vector fields can be expressed as linear combinations of the \textit{coordinate vector fields} $\frac{\partial}{\partial u^i}$ and extracting the coefficients defines $\dd u^1,\dots,\dd u^n\in\Omega^1(U)$. We then define $dU^I:=du^{i_1}\wedge\dots\wedge du^{i_k}$ and for $\omega\in\Omega^k(M)$ there are unique smooth functions $\omega_I:U\to\RR$ such that $\omega|_U=\sum_I\omega_Idu^I$. Replacing partial derivatives by the action of coordinate vector fields, we can then define $\dd\omega\in\Omega^{k+1}(U)$ as in \eqref{d-def}. Commutativity of $\dd$ with pullbacks implies that any other choice of local coordinates leads to the same result and hence local results fit together to define $d:\Omega^k(M)\to\Omega^{k+1}(M)$. This also shows that the properties we have observed for open subsets of $\RR^n$, like the Leibniz rule in \eqref{Leibniz} and $\dd\circ\dd=0$ continue to hold on smooth manifolds. Also the connection to algebraic topology extends to this setting.  

I just want to comment briefly on the relation to \textit{integration} (which will not play a big role in what follows). An $n$-form $\omega \in \Omega^n(M)$ is locally expressed as $\omega = \omega_{1\ldots n}\, \dd u^{1\ldots n}$. Under a coordinate change $y = \Phi(x)$, the component transforms as $\omega_{1\ldots n}^x = \omega_{1\ldots n}^y \cdot \det(\DD\Phi)$. Up to taking the absolute value of the determinant, this is compatible with the transformation of multiple integrals. The sign issue is handled by introducing orientations, then one obtains an integral of compactly supported $n$-forms which is invariant under orientation preserving diffeomorphisms. For lower degree forms, one can restrict to oriented submanifolds of the right dimension (i.e.\ form the pullback with respect to the inclusion of the submanifolds) and then integrate this restriction.

\section{Vector-valued differential forms and connections}
The construction of BGG complexes used in applied mathematics usually starts from a ``sum of copies of the de Rham complex'' or from a vector valued de Rham complex. Even in the latter point of view (which is better than the former in my opinion) it is hard to see that this involves more than just the exterior derivative. Since this is crucial for getting a motivation for the BGG construction, I'll elaborate on this point a bit. 

The initial definition of \textit{forms with values in a vector space} is unproblematic. Given a finite dimensional vector space $W$, we can consider the spaces $\Lambda^k\RR^{n*}\otimes W$ of alternating, $k$-linear maps $(\RR^n)^k\to W$. Given an open subset $U\subset\RR^n$, we can then define a $W$-valued $k$-form on $U$ as a smooth map  $U\to\Lambda^k\RR^{n*}\otimes W$. The space $\Omega^k(U,W)$ then is a vector space under point-wise operations and such forms can again be multiplied by $\RR$-valued smooth functions. Choosing a basis $\{w_a\}$ of $W$, we can write a form $\phi\in\Omega^k(U,W)$ as $\phi=\sum_a \phi^a w_a$ with each $\phi_a\in\Omega^k(U)$. (This just means that for $x\in U$ and $v_1,\dots,v_k\in\RR^n$, $\phi^a(x)(v_1,\dots,v_k)$ is the coefficient of $w_a$ in the expansion of the vector $\phi(x)(v_1,\dots,v_k)\in W$ in terms of the basis $\{w_a\}$. This shows that $\Omega^k(U,W)\cong\Omega^k(U)\otimes W$, so we will also write the decomposition as $\phi=\sum_a \phi^a\otimes w_a$. This also readily leads to an expansion $\phi = \sum_{I, a} \phi^a_{I}\, \dd x^I \otimes w_a$. On these forms one then considers the component-wise exterior derivative, i.e.
\[
\dd\phi = (\dd\phi^a)\otimes w_a = \sum_{I, a} ({\dd\phi^a_{I}} \wedge \dd x^I) \otimes w_a.
\]
This definition readily implies that $\dd\circ \dd=0$ also in this case, so we again get a complex. Also, the cohomology of this complex in degree $k$ obvious equals $H^k(U)\otimes W$.  

The additional ingredient referred to above is the distinguished role of the constant forms $\dd x^I\otimes w_a$, in particular the constant functions $w_a$. This also involves a choice of basis, but this is a minor issue. As we shall see below, $W$ usually will be closely related to $\RR^n$, so one should think about the $w_a$ themselves rather as constant vector fields or constant differential forms. This already shows that extensions to manifolds may become tricky, since these concepts are not even defined for manifolds.

\subsection{Connections}

To deal with these issues conceptually, we should actually think of functions $U\to W$ as sections of the trivial vector bundle $U\times W\to U$. This trivial bundle comes with a flat connection, for which the parallel sections are exactly the constant functions. To introduce these concepts more formally, we have to move to a slightly different point of view on $W$-valued differential forms, which puts less emphasis on the role of constant forms. Given a k-Form $\tau\in\Omega^k(U)$ and a smooth function $F:U\to W$, we can define $\tau\otimes F\in\Omega^k(U,W)$ by $(\tau\otimes F)(x)(v_1,\dots,v_k):=\tau(x)(v_1,\dots,v_k)\cdot F(x)$, so one multiplies the vector $F(x)$ by the real number produced by $\tau$. The expansion $\omega=\sum_a\omega^a\otimes w_a$ shows that any $\omega\in\Omega^k(U,W)$ can be written as a finite sum of terms of the form $\tau\otimes F$. Of course, in this more general tensor product notation, there is quite a lot of ambiguity. In particular, for a smooth function $f:U\to\RR$, we get $f(\tau\otimes F)=(f\tau)\otimes F=\tau\otimes (fF)$ (and technically, we are dealing with a tensor product over the algebra $C^\infty(U,\RR)$ here). Using this language, it is now easy to introduce the general concept of a linear connection in this special case. 

\begin{definition*}
     A \textit{linear connection} on the trivial vector bundle $U \times W\to U$ is a linear operator 
\[
\DD \colon C^\infty(U, W) \to \Omega^1(U, W)
\]
that satisfies the Leibniz rule: for $f \in C^\infty(U, \RR)$ and $g \in C^\infty(U, W)$,
\[
\DD(fg) = {\dd f} \otimes g + f\, \DD(g).
\]
\end{definition*}
 
 This Leibniz rule is the differential geometer's way of saying that $\DD$ should be a first order differential operator whose symbol is the identity map on $\RR^{n*}\otimes W$. The component-wise exterior derivative indeed satisfies this definition: Writing $g=\sum_a g^aw_a$ we get $\dd g=\sum_a \dd g^a\otimes w_a$ and 
$$\dd(fg)=\sum_a (g^a\dd f\otimes w_a+f\dd g^a\otimes w_a=df\otimes g+f\dd(g).$$
The constant functions are exactly those functions for which $\dd g=0$. In this context, they are referred to as ``parallel sections''. This justifies the terminology that $\dd$ is the linear connection obtained from the trivialization of the bundle $U\times W\to U$. 

The definition of a linear connection easily implies that the difference two linear connections $\DD$ and $\hat{\DD}$ on the bundle $U\times W$ is a tensorial (point-wise) operation. This can for example be expressed as the fact that there are 
smooth maps $A_i \colon U \to L(W, W)$ such that 
\[
\hat{\DD}(g)(x) - \DD(g)(x) = \sum_i \dd x^i \otimes A_i(x)(g(x)),
\]
and this holds on general vector bundles. In the case of the trivial bundle $U\times W$, one can apply this with $\DD=\dd$, thus relating a general connection to the component wise exterior derivative. On general vector bundles, this can only be done in local trivializations, leading to the usual local description of linear connections via connection 1-forms. In our case this is a bit misleading since it brings back in the distinguished role of constant functions. 

There is a general way to extend a linear connection $\DD$ on a vector bundle $E\to M$ to an operation on $E$-valued differential forms, called the \textit{covariant exterior derivative}. In the picture of the trivial bundle $U\times W$, this amounts to extending $\DD:C^\infty(U,W)\to\Omega^1(U,W)$ to an operation $d^{\DD}:\Omega^k(U,W)\to\Omega^{k+1}(U,W)$ for each $k$. In order to understand this independently of constant functions, we consider $\phi\otimes g$ for $\phi\in\Omega^k(U)$ and $g:U\to W$, so we know that for $f\in C^\infty(U,\RR)$, we have $f\phi\otimes g=\phi\otimes fg$. Then we define
\[
\dd^{\DD}(\phi \otimes g) = {\dd\phi} \otimes g + (-1)^k\, \phi \wedge \DD(g), 
\]
with he wedge product in the second summand defined as 
$$
(\phi\wedge \DD(g))(\xi_1,\dots,\xi_{k+1})=\tfrac{1}{k!}\sum_{\sigma\in\mathfrak{S}_{k+1}}\mathrm{sgn}(\sigma)\phi(\xi_{\sigma_1},\dots,\xi_{\sigma_k})\DD(g)(\xi_{\sigma_{k+1}}).
$$
The the Leibniz rules for $\dd$ and $\DD$ easily imply that this is well defined, i.e.~compatible with the ambiguity $f\phi\otimes g=\phi\otimes fg$, and then we extend it by linearity to general elements of $\Omega^k(U,W)$. Starting from the trivial connection $\DD = \dd$, this recovers the component-wise exterior derivative.

\subsection{Curvature and flat connections}
To understand ``how far'' a general connection $\DD$ on $U\times W$ is from the trivial connection $\dd$, it is natural to ask the question of existence of parallel sections, i.e.\ of functions $g:U\to W$ such that $\DD(g)=0$. First observe that there cannot be many parallel sections. Indeed, from the relation to $\dd$ discussed above, we see that $\DD(g)=0$ is a linear system of first order PDE. Along a smooth curve, this reduces to a linear system of first order ODE, so if $U$ is connected, any solution of the equation $\DD(g)=0$ is uniquely determined by its value in a single point $x_0\in U$. Now suppose that we have $N$ linearly independent solutions $\{g_a\}$ to the equation $\DD(g)=0$ where $N=\dim(W)$. Then the elements $g_a(x_0)\in W$ have to be linearly independent and thus form a basis of $W$, and by uniqueness of solutions, the same has to be true in any point $x\in U$. But this in turn means that any smooth function $g:U\to W$ can be uniquely written as $g=\sum_a f^ag_a$ for smooth functions $f_a:U\to\RR$. In the language introduced above, the sections $g_a$ form a (global) frame for the trivial vector bundle $U\times W$. More generally, any form $\omega\in\Omega^k(U,W)$ can be uniquely written as $\sum_a\phi^a\otimes g_a$ for $k$-forms $\phi^a\in\Omega^k(U)$. And then the definition of $\dd^{\DD}$ readily implies that $\dd^\DD(\sum_a\phi^a\otimes g_a)=\sum_a\dd^\DD(\phi^a\otimes g_a)=\sum_a(\dd\phi^a)\otimes g_a$. So if there is a maximal family of parallel sections, we are just dealing with the component-wise exterior derivative ``in disguise''. 

To understand the question of existence of solutions of $\DD(g)=0$, we return to the point of view of a system of first order PDE. Writing such a system in local coordinates, the natural move is to check for integrability conditions. This means that one takes the equations, applies a partial derivative to them and, if necessary, inserts for first partials that occur in the result from the original equations. Then one check whether symmetry of the second partial derivative imposes additional equations or not.  

In the case of the system $\DD(g)=0$, there is no need to do this by hand, since the covariant exterior derivative does the job. If $\DD(g)=0$, then of course we must have $\dd^{\DD}(\DD(g))=0$. The latter equation turns out to be easier than the former, since for $f\in C^\infty(U,\RR)$ and any $g:U\to W$, the definitions easily imply that $\dd^{\DD}(f\omega)=\dd f\wedge\omega+f\dd^{\DD}(\omega)$ which in turn implies  
\begin{align*}
 \dd^{\DD}(\DD(fg))&=\dd^{\DD}(\dd f\otimes g+f\DD(g))\\
                  &=\dd(\dd f)\otimes g-\dd f\wedge \DD(g)+\dd f\wedge\DD(g)+f\dd^{\DD}(\DD(g))=f\dd^{\DD}(\DD(g)).   
\end{align*}
  
This implies that, while the map $\dd^{\DD} \circ \DD:C^\infty(U,W)\to\Omega^2(U,W)$ is non-zero in general, it is always a point-wise operation: There exist smooth maps $R_{ij} \colon U \to L(W, W)$, such that
\[
\dd^{\DD}(\DD(g)) = \sum_{i < j} \dd x^i \wedge \dd x^j \otimes R_{ij}(x)(g(x)).
\]
So actually, we can view the maps $R_{ij}$ to define an element of $\Omega^2(U,L(W,W))$ and this is an equivalent definition of the curvature of the connection $\DD$.

\begin{definition*}
    The connection $\nabla$ is \emph{flat} if $R = 0$, equivalently $R_{ij} = 0$ for all $i, j$.
\end{definition*} 

Consequences of flatness:
\begin{itemize}
\item Locally, there exists a $(\dim W)$-dimensional family $\{g_\ell\}$ of sections $g_\ell \in C^\infty(U, W)$, which are parallel for $\DD$, i.e.\ satisfy  $\DD(g_\ell) = 0$, such that $\{g_\ell(x)\}$ is a basis of $W$ for every $x$.  Expanding forms in $\Omega^k((U,W)$ in this frame, i.e.\ as $\sum_\ell \omega^\ell\otimes g_\ell$, $\dd^{\DD}$ reduces to component-wise $\dd$. (This follows from the Frobenius theorem applied to the horizontal distribution on the frame bundle.)
\item $\dd^{\DD} \circ \dd^{\DD} = 0$ in every degree, so $(\Omega^*(U, W), \dd^{\DD})$ is a complex, which is called the \emph{twisted de Rham complex}. Local smooth sections which are parallel for $\DD$ define a sheaf and the twisted de Rham complex computes the sheaf cohomology of this sheaf. Hence there are several other tools that can be used to analyze this cohomology,  for example Mayer--Vietoris arguments and \v Cech cohomology.
\end{itemize}

\subsection{Modifying connections}
We have already noted that the difference $\hat{\DD} - \DD$ of two connections is tensorial, so there is $A \in C^\infty\qty\big(U, \RR^{n*} \otimes L(W, W))$ such that $(\hat{\DD} - \DD)(g)(x) = \sum_i \dd x^i \otimes A_i(x)(g(x))$. Conversely, given a connection $\DD$ and $A$, one can use this equation to define a connection $\hat{\DD}$. The difference $\hat{R} - R$ of the curvatures of $\hat{\DD}$ and $\DD$ is then given by applying a non-linear first order differential operator to $A$. Given $\DD$ we can thus ask whether we can modify it to a flat connection. This is a very difficult question in general, and several famous non-linear PDE can be equivalently formulated as flatness of a connection constructed in some way. Somewhat easier, we can start from the trivial (flat) connection $\dd$, and ask for modifications that don't destroy flatness. Still this is a very hard question in general, but we can further simplify things by requiring that $A$ is constant and hence given by an element of $\RR^{n*} \otimes L(W,W)$. In this case, the result is rather easy.

\begin{prop}\label{flat}
 For a constant function $A \in \RR^{n*}\otimes L(W,W)$, the linear connection $\DD = {\dd} + A$ is flat if and only if
\[
A(v_1) \circ A(v_2) = A(v_2) \circ A(v_1) \qquad \text{for all } v_1, v_2 \in \RR^n,
\]   
\end{prop}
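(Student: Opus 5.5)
The plan is to compute the curvature of $\DD=\dd+A$ directly, using the characterization established above: $\dd^{\DD}\circ\DD$ is tensorial, so it is enough to evaluate it on constant functions $g\equiv w\in W$. Write $A=\sum_i \dd x^i\otimes A_i$ with constant $A_i\in L(W,W)$, where $A_i=A(e_i)$ for the standard basis $e_i$. For constant $g=w$ we get $\DD(w)=\sum_i \dd x^i\otimes A_i w$, and $A_iw$ is again a constant function $U\to W$.

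Next I will apply the defining formula $\dd^{\DD}(\phi\otimes g)=\dd\phi\otimes g+(-1)^k\phi\wedge\DD(g)$ with $k=1$, $\phi=\dd x^i$ and $g=A_iw$. Since $\dd(\dd x^i)=0$, this gives
$\dd^{\DD}(\DD w)=-\sum_i \dd x^i\wedge\DD(A_iw)$.
Because $A_iw$ is constant, $\DD(A_iw)=\sum_j\dd x^j\otimes A_jA_iw$. Hence
$\dd^{\DD}(\DD w)=-\sum_{i,j}\dd x^i\wedge\dd x^j\otimes A_jA_iw$.
Now I will use skew symmetry $\dd x^j\wedge\dd x^i=-\dd x^i\wedge\dd x^j$ to collect the terms into $\sum_{i<j}\dd x^i\wedge\dd x^j\otimes (A_iA_j-A_jA_i)w$. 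This reads off $R_{ij}=A_iA_j-A_jA_i$, a constant function. Here I must check that the wedge convention $\phi\wedge\DD(g)$ for one-forms agrees with the scalar wedge, so that $\dd x^i\wedge(\dd x^j\otimes u)=(\dd x^i\wedge\dd x^j)\otimes u$. This follows directly from comparing the two displayed formulas with $k=\ell=1$. The overall sign does not affect the vanishing question.

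Finally, flatness means $R_{ij}=0$ for all $i<j$, which says $A_i$ and $A_j$ commute for all $i,j$. By bilinearity of $(v_1,v_2)\mapsto A(v_1)\circ A(v_2)-A(v_2)\circ A(v_1)$, this is equivalent to the commutation relation for all $v_1,v_2\in\RR^n$, since the basis $e_i$ spans. The only delicate point is the bookkeeping of signs and normalization in the wedge of a scalar form with a $W$-valued form. Everything else is immediate, because the $A_i$ are constant, so no derivative of $A$ appears.
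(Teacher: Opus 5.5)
Your proof is correct. The paper states this proposition without proof; it only remarks that for constant $A$ ``the result is rather easy''. Your computation is exactly the verification the surrounding material sets up. Tensoriality of $\dd^{\DD}\circ\DD$ reduces everything to constant sections $w$. There $\dd(\dd x^i)=0$ and the constancy of $A_iw$ leave only $\dd^{\DD}(\DD w)=\sum_{i<j}\dd x^i\wedge\dd x^j\otimes(A_iA_j-A_jA_i)w$. Your sign bookkeeping is right; this is the familiar $\dd A+A\wedge A$ with $\dd A=0$. The passage from the basis vectors $e_i$ to arbitrary $v_1,v_2$ by bilinearity is also fine.
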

Thus to construct flat modifications of $\dd$ by constant functions, we need commuting families of linear maps $W\to W$. 

\subsection{Remarks on the literature}
It is difficult to refer to the literature for the material discussed in this section. Linear connections are often not treated in introductory texts on differential geometry. Of course, Riemannian geometry needs the Levi-Civita connection, but this arises as a derived structure canonically associated to a Riemannian (or pseudo-Riemannian) metric. Here we rather need linear connections as a flexible structure that can be easily deformed. One difficulty is that meaningful general treatments of linear connections have to involve vector bundles. Moreover, in more advanced textbooks like \cite{KN}, there often is a focus on the relation to principal bundles and principal connections, which is beyond what is needed in the current context. What we need here can be found in the appropriate advanced literature on differential geometry as the local study of linear connections, which compares a given connection to the flat connection induced by a local trivialization of the vector bundle. 

\section{The BGG construction}
This section mainly follows my joint article \cite{Cap-Hu} with Kaibo Hu. 

The \textbf{Bernstein--Gelfand--Gelfand (BGG) construction} produces complexes of differential operators from a twisted de Rham complex in two steps:
\begin{enumerate}
\item \textbf{Twist:} For a carefully chosen vector space $W$, modify the component-wise exterior derivative $\dd$ on $\Omega^*(U, W)$ ``artificially'' by a tensorial map $S$ in such a way that one again obtains a complex (and still has control on the cohomology). What one requires is that  $S \circ S = 0$ and $S \circ \dd = -{\dd} \circ S$, which ensures that $\dd_W: = {\dd} + S$ satisfies $\dd_W^2 = 0$. In many cases the complex $(\Omega^*(U, W), \dd_W)$ computes the same cohomology as $(\Omega^*(U, W), \dd)$ and thus has cohomology $H^*(U) \otimes W$.
\item \textbf{Project:} Identify appropriate subspaces in the spaces $\Omega^*(U, W)$ and ``compress'' $\dd_W$ to higher order operators on these subspaces, basically by removing information that cannot influence the cohomology as in the case of the Rumin complex. This leads to a complex $(\Upsilon_*, \mathcal{D})$ which computes the same cohomology as $(\Omega^*(U,W),\dd_W)$. 
\end{enumerate}

\textbf{Remark (Homotopy-operator variant).} In the situation of open subsets of $\RR^n$, there is a simple and popular way to prove the cohomological equivalence in Step 1 via a homotopy operator $K$ satisfying $\dd K - K\dd = S$. Technically speaking, these show that $S$ is chain homotopic to $0$, which in turn implies that $\dd$ is chain homotopic to $\dd_W$ and hence their cohomologies are isomorphic. Due to the special setup in this situation, one can actually do much better here and construct an isomorphism between the complexes
$(\Omega^*(U, W), \dd)$ and $(\Omega^*(U, W), \dd_W)$. It is important to point out here however, that these $K$ operators are much less natural than the $S$ operators, and in particular, one cannot expect a counterpart to the $K$ operators on smooth manifolds. I'll say more about that below. 

\subsection{Motivating example: the twisted complex leading to the  Hessian complex}\label{ex:Hess}

Take $W = W_0 \oplus W_1 = \RR \oplus \RR^{n*}$, so $\Omega^k(U, W) = \Omega^k(U) \oplus \Omega^k(U, \RR^{n*})$. Taking $dx^1,\dots,dx^n$ as a basis for the space $\RR^{n*}$, we can expand a form $\omega\in\Omega^k(U,\RR^{n*})$ as $\sum_{|I|=k,j}\omega_{I,j}dx^I\otimes dx^j$ for $\omega_{I,j}\in C^\infty(U,\RR)$. Hence there is an obvious maps $\Omega^k(U,\RR^{n*})\to\Omega^{k+1}(U)$, which comes from sending $dx^I\otimes dx^j$ to $dx^j\wedge dx^I$, i.e.
$$
S(\omega)=S\left(\sum_{|I|=k,j}\omega_{I,j}dx^I\otimes dx^j\right):=\sum_{|I|=k,j}\omega_{I,j}dx^j\wedge dx^I.
$$
More naturally, viewing $\Omega^k(U)$ as $C^\infty(U,\Lambda^k\RR^{n*})$, we can view $\Omega^k(U,\RR^{n*})$ as $C^\infty(U,\Lambda^k\RR^{n*}\otimes\RR^{n*})$, and the target space can be viewed as the space of $k+1$-linear maps $(\RR^n)^{k+1}\to\RR$, which are alternating in the first $k$ arguments. Now the complete alternation defines a map $\Lambda^k\RR^{n*}\otimes\RR^{n*}\to\Lambda^{k+1}\RR^{n*}$ and by definition $S$ is, up to an appropriate choice of sign, induced by composition with this map. Thus the $S$ operators here are really of point-wise (tensorial) nature. The pointwise operations are recorded in the following diagram. 

\begin{center}
\begin{tikzcd}
\RR^{n*} & \RR^{n*}\otimes \RR & \Lambda^2\RR^{n*}\otimes \RR & \Lambda^3\RR^{n*}\otimes \RR & \cdots\\
\RR^{n*} \arrow[ur, "\cong"] & \RR^{n*}\otimes \RR^{n*}\arrow[ur, "-\Alt"] & \Lambda^2\RR^{n*}\otimes \RR^{n*}\arrow[ur, "\Alt"] & \Lambda^3\RR^{n*}\otimes \RR^{n*}\arrow[ur, "-\Alt"] & \cdots
\end{tikzcd}
\end{center}

Here the top row is $\Lambda^k\RR^{n*} \otimes W_0$, the bottom row is $\Lambda^k\RR^{n*} \otimes W_1$, and we denote the diagonal arrows which induce the $S$ operators as $s_k=s_{k,1}:\Lambda^k\RR^{n*}\otimes W_1\to \Lambda^{k+1}\RR^{n*}\otimes W_0$. Alternatively, we can also view $s_k$ as a linear map $\Lambda^k\RR^{n*}\otimes W\to\Lambda^{k+1}\RR^n\otimes W$, so viewing the rows as describing vectors, this is the matrix $\left(\begin{smallmatrix} 0 & s_k\\ 0 & 0 \end{smallmatrix}\right)$, which readily shows that in this interpretation we have $s_{k+1}\circ s_k=0$ as required. We will use similar notation for the $S$ operators, i.e.\ alternatively view 
$$
S_k:\Omega^k(U,W_1)=\Omega^k(U,\RR^{n*})\to\Omega^{k+1}(U)=\Omega^{k+1}(U,W_0)
$$ 
as a map $\Omega^k(U,W)\to\Omega^{k+1}(U,W)$ which in matrix form looks as $\left(\begin{smallmatrix} 0 & S_k\\ 0 & 0 \end{smallmatrix}\right)$.

Following the general strategy described above, we define $\dd_W=\dd+S$ so in matrix form this reads as $\left(\begin{smallmatrix} \dd & S_k\\ 0 & \dd \end{smallmatrix}\right)$. So we next have to study the relation between $\dd$ and $\dd_W$. 

\textbf{First approach: $K$ operators.} Define $K:\Omega^k(U,\RR^{n*})\to\Omega^k(U)$ as the operator induced by $dx^I\otimes dx^j\mapsto x^jdx^I$, i.e.\ 
$$
K(\omega)=K(\sum_{|I|=k,j}\omega_{I,j}dx^I\otimes dx^j):=\sum_{I,j}x^j\omega_{I,j}dx^I. 
$$
From this definition, one immediately computes that $\dd K-K\dd=S$, which readily implies that $\dd S=-S\dd$ and together with $S\circ S=0$, we get $\dd_W\circ\dd_W=0$. As indicated above,  $\dd K-K\dd=S$ also immediately implies that $\dd$ and $\dd_W$ compute the same cohomology, but one can do better. Of course $\textrm{id}+K$, i.e.\ the matrix $\left(\begin{smallmatrix} \textrm{id} & K \\ 0 & \textrm{id} \end{smallmatrix}\right)$ defines an isomorphisms $\Omega^k(U,W)\to\Omega^k(U,W)$ for each $k$ (with inverse $\textrm{id}-K$ and in matrix form one immediately computes that 
$$
\begin{pmatrix} \textrm{id} & K \\ 0 & \textrm{id} \end{pmatrix} \begin{pmatrix} \dd & S \\ 0 & \dd \end{pmatrix}=\begin{pmatrix} \dd & 0 \\ 0 & \dd \end{pmatrix} \begin{pmatrix} \textrm{id} & K \\ 0 & \textrm{id} \end{pmatrix}. 
$$
Hence $\textrm{id}+K$ defines an isomorphism $(\Omega^*(U,W),\dd_W)\to (\Omega^*(U,W),\dd)$ of complexes. 

\textbf{Second approach: flat connections.}
In degree $0$, $S_0:\Omega^0(U,\RR^{n*})\to \Omega^1(U)$ is induced by the identity on $\RR^{n*}$. From the discussion of flat connections above, we know that we have to re-interpret this in order to think about flatness. We first have to interpret $S$ as a map $W\to\RR^{n*}\otimes W$. This is a linear map from $W$ to linear maps from $R^n\to W$ and hence a bilinear map $W\times\RR^n\to W$. In this picture, the definition of $S$ reads as $\left(\left(\begin{smallmatrix} t\\ \lambda\end{smallmatrix}\right),v\right)\mapsto \left(\begin{smallmatrix} \lambda(v)\\ 0\end{smallmatrix}\right)$ for $t\in\RR$, $\lambda\in\RR^{n*}$ and $v\in\RR^n$. The we have to re-interpret this is a map from $\RR^n$ to the space of linear maps $W\to W$ by ``taking out the other variable''. But this of course is given by sending $v\in\RR^n$ to the matrix $\left(\begin{smallmatrix} 0 & v \\ 0 & 0\end{smallmatrix}\right)$ and clearly all matrices of this form commute. Hence our discussion from above shows that $\DD \coloneqq {\dd} + S_0$ defines a flat connection on the trivial bundle $U\times W$. 

It turns out that in the general approach via representation theory that we will discuss below, the $S$ operators $S_k$ for $k>0$ are automatically constructed in such a way that $\dd_W=\dd+S$ coincides with the covariant exterior derivative $d^{\DD}$ in all degrees. It thus follows from general arguments that $\dd_W\circ\dd_W=0$ and the resulting complex computes the cohomology of the sheaf of locally parallel sections of $\DD$. 

This also provides a conceptual explanation of how the $K$-operators arise and of why $\dd$ and $\dd_W$ lead to the same cohomologies in our case: From our above discussion, we know that locally there is an $n+1$-dimensional family of parallel sections for $\DD$. Starting from a point $x_0\in U$, we can take a fixed element $(t,\lambda)$ in $W$ and try to find a function $g:U\to W$ such that $\DD(g)=0$ and $g(x_0)=(t,\lambda)$. Doing this for $U=\RR^n$ and $x_0=0$ and writing $g=(f,\phi)$ for $f\in C^\infty(\RR^n,\RR)$ and $\phi\in\Omega^1(\RR^n)$, we have to solve the system $df+\phi=0$ and $d\phi=0$ of equations with initial conditions $f(0)=t$ and $\phi(0)=\lambda$. Of course, this means that $\phi$ has to be the constant $1$-form $\lambda$, while for $\lambda=\sum a_idx^i$ we get $f=t+\sum a_ix^i$, so the solution operator is $\textrm{id}+K$. 

\medskip

\textbf{Discussion:} One important point to learn from this example is that the vector space $W$ that shows up in the construction is not really just a vector space. In the example, a smooth map $U\to W$ should rather be viewed as a pair $(f,\phi)$, where $f:U\to\RR$ is a smooth function and $\phi\in\Omega^1(U)$. Correspondingly, a form in $\Omega^k(U,W)$ should rather be viewed as a pair consisting of a $k$ form and a $k+1$-tensor field that is alternating in the first $k$ entries. So one should think of $W$ as being obtained by some kind of construction from $\RR^n$ and of $U\times W$ as being constructed in a corresponding fashion from the tangent bundle of $U$. This of course is also important when thinking about extensions to manifolds. 

In the same spirit, it is important to carefully distinguish between spaces and dual spaces. The situation dual to the Hessian complex constructed here would start from $W$ with $W_0=\RR^n$ and $W_1=\RR$ and correspondingly functions with values in $W$ should rather be viewed as a pair consisting of a vector field and a function. The basic map $S_0:W_1=\RR\to\RR^{n*}\otimes\RR^n=\RR^{n*}\otimes W_1$ maps $a\in\RR$ to the linear map $a\textrm{id}_{\RR^n}:\RR^n\to\RR^n$.  

\subsection{General axiomatic setup for the twisted complex}

The data needed to start the BGG construction are:
\begin{itemize}
\item A vector space $W$ decomposed as $W = W_0 \oplus W_1 \oplus \cdots \oplus W_N$ into a direct sum of linear subspaces. We also assume that each $W_\ell$ is endowed with an inner product $\langle\ ,\ \rangle$.
\item Linear maps $s_{k,\ell} \colon \Lambda^k\RR^{n*} \otimes W_\ell \to \Lambda^{k+1}\RR^{n*} \otimes W_{\ell-1}$ such that $s_{k+1,\ell-1} \circ s_{k,\ell} = 0$. We can also interpret them as defining maps $s_k \colon \Lambda^k\RR^{n*} \otimes W \to \Lambda^{k+1}\RR^{n*} \otimes W$ such that $s_{k+1} \circ s_k = 0$.
\end{itemize}
Hence $(\Lambda^*\RR^{n*}\otimes W,s_*)$ is a complex of finite dimensional vector spaces and we can look at the cohomology spaces $\mathcal{N}(s_{k,\ell})/\mathcal{R}(s_{k-1,\ell+1})$ in each bidegree $(k,\ell)$. In the examples coming from representation theory, it will turn out that in most bidegrees these cohomologies are trivial. In the setting of open subsets of $\RR^n$, where one has a Riemannian metric at hand, we can replace these cohomologies by harmonic subspaces as follows.

Together with the standard inner product on $\RR^n$, the inner products on the spaces $W_\ell$ induce inner products on each of the spaces $\Lambda^k\RR^{n*} \otimes W_\ell$. These can be used to construct \textit{pseudo-inverses} $t_{k,\ell} \colon \Lambda^k\RR^{n*} \otimes W_{\ell} \to \Lambda^{k-1}\RR^{n*} \otimes W_{\ell+1}$: Interpret $s_{k-1,\ell+1}:\Lambda^{k-1}\RR^{n*}\otimes W_{\ell+1}\to\Lambda^k\RR^{n*}\otimes W_\ell$ as defining a linear isomorphism from the orthocomplement of its kernel to its image. Define $t_{k,\ell}$ as the inverse of this isomorphism on $\mathcal R(s_{k-1,\ell+1})$ and as $0$ on $\mathcal R(s_{k-1,\ell+1})^\perp$. Above, we collect these maps to $t_k:\Lambda^k\RR^{n*}\otimes W\to \Lambda^{k-1}\RR^{n*}\otimes W$ and obtain the basic properties 
\[
\mathcal R(t_k) = \mathcal N(s_{k-1})^\perp, \qquad \mathcal N(t_k) = \mathcal R(s_{k-1})^\perp, \qquad t_k \circ s_{k-1} \circ t_k = t_k, \qquad s_k \circ t_{k+1} \circ s_k = s_k.
\]\
We then define \textit{harmonic spaces} 
$$
h_{k,l} \coloneqq \mathcal{N}(s_{k,l})\cap\mathcal{R}(s_{k-1,\ell+1})^\perp=\mathcal{N}(s_{k,\ell}) \cap \mathcal{N}(t_{k,l})\subset \Lambda^k\RR^{n*} \otimes W_l.
$$
We also define $h_k=\oplus_\ell h_{k,\ell}$. 

Given an open subset $U\subset\RR^n$, we can identify $\Omega^k(U,W_\ell)$ with $C^\infty(U,\Lambda^k\RR^{n*}\otimes W_\ell)$. Hence the maps $s_{k,\ell}$ and $t_{k,\ell}$ induce tensorial operators $S_{k,\ell}:\Omega^k(U,W_{\ell})\to\Omega^{k+1}(U,W_{\ell-1})$ and $T_{k,\ell}:\Omega^k(U,W_{\ell})\to\Omega^{k-1}(U,W_{\ell+1})$. We collect them to operators $S_k$ and $T_k$ and $S$ and $T$ and obtain the properties $S \circ S = 0$, $T \circ T = 0$, $S \circ T \circ S = S$, $T \circ S \circ T = T$.

\textbf{Assumption:} We assume that $S \circ \dd = -{\dd} \circ S$ (which will be automatically satisfied in the examples coming from representation theory). This implies that $\dd_W \coloneqq {\dd} + S$ satisfies $\dd_W\circ \dd_W = 0$, and $(\Omega^*(U, W), \dd_W)$ is a complex. In the examples constructed below, the cohomology of this complex is understood. In many cases, it coincides with $H^*(U) \otimes W$.

The result is usually depicted as a \textit{BGG diagram} with $N+1$ rows corresponding to the summands $W_\ell\subset W$ and $n+1$ columns of the form 
\begin{center}
\begin{tikzcd}
{} & {} & {} & {} \\
{} \arrow[r,"\dd"] & \Omega^k(U, W_\ell) \arrow[r, "\dd"] \arrow[ur, "S_{k,\ell}"] & \Omega^{k+1}(U, W_\ell) \arrow[r,"\dd"]\arrow[ur, "S_{k+1,\ell}"] & {}\\
{}\arrow[r,"\dd"]\arrow[ur, "S_{k-1,\ell+1}"] & \Omega^k(U, W_{\ell+1})\arrow[r, "\dd"]\arrow[ur, "S_{k,\ell+1}"] & \Omega^{k+1}(U, W_{\ell+1})\arrow[r,"\dd"]\arrow[ur, "S_{k+1,\ell+1}"] & {}\\
{} \arrow[ur, "S_{k-1,\ell+1}"] & {} \arrow[ur, "S_{k,\ell+2}"] & {} & {} 
\end{tikzcd}
\end{center}

\subsection{The BGG construction}\label{BGG}

Define $\Upsilon_{k,l} \coloneqq C^\infty(U, h_{k,l}) \subset \Omega^k(U, W_l)$ and put $\Upsilon_k:=\oplus_\ell\Upsilon_{k,\ell}$. These will be the spaces showing up in the BGG complex. The key step for the BGG construction is to define the so-called \textit{splitting operators} $L_k:\Upsilon_k\to \Omega^k(U,W)$. Here one should think about $\Upsilon_k$ as $\mathcal{N}(T_k)/\mathcal R(T_{k+1})$ and the splitting operator constructs a specific representative for a class in this quotient. 

Starting from $\sigma \in \Upsilon_{k,\ell}$ (so $S(\sigma) = 0$ and $T(\sigma) = 0$), one constructs elements $\varphi_j \in \mathcal{R}(T)\cap\Omega^k(U, W_j)$ for $j > \ell$ to obtain a representative $L(\sigma)=\sigma+\varphi_{\ell+1}+\dots+\varphi_N$ for the class defined by $\sigma$ such that $T\dd_W(L(\sigma)) = 0$. This is done recursively and the result can be written in the form of a Neumann series, see Section 1.4 of \cite{Cap-Hu} for details. To understand the construction, the main point is to understand one of these steps: 

Since $S(\sigma)=0$, we see that $\dd_W(\sigma) = \dd{\sigma} + {S(\sigma)} = \dd{\sigma} \in \Omega^{k+1}(U, W_\ell)$. Put 
\[
\varphi_{\ell+1} \coloneqq -T\dd_W(\sigma) \in \Omega^k(U, W_{\ell+1}).
\]
This has the required property for the first step: $\dd_W(\varphi_{\ell+1})=\dd\varphi_{\ell+1}-ST\dd_W(\sigma)$ and the terms lie in $\Omega^{k+1}(U,W_{\ell+1})$ and in $\Omega^{k+1}(U,W_{\ell})$, respectively. Applying $T$ to the second summand, we get $-T\dd_W\sigma$, so $T\dd_W(\sigma+\varphi_{\ell+1})=T\dd\varphi_{\ell+1}\in\Omega^k(U,W_{\ell+1})$. Hence we can define $\varphi_{\ell+2}$ to be the negative of this and iterate the argument. This terminates after finitely many steps since $W$ has only finitely many summands and the result can be expressed as a finite Neumann series. In particular, this shows that $L:\Upsilon_{k,\ell}\to\Omega^k(U,W)$ is a differential operator of order at most $N-\ell+1$. We also observe that $L(\sigma)-\sigma=\varphi_{\ell+1}+\dots+\varphi_N$ lies in $\mathcal R(T)$, so $L(\sigma)\in\mathcal N(T_k)$ indeed represents the class of $\sigma$ in $\mathcal N(T_k)/\mathcal{R}(T_{k+1})$.  

While the splitting operators have higher order in general and explicit formulae get complicated quickly, the properties discussed above provide a simple characterization of $L$ by a first order property, which we formulate as a proposition: 

\begin{prop}\label{split} 
For $\sigma\in\Upsilon_{k,\ell}$, assume that $\omega\in\Omega^k(U,W)$ is a form such that 
\begin{enumerate}
    \item $\omega-\sigma\in \mathcal{R}(T_{k+1})$ (which also implies $\omega\in\mathcal{N}(T_k)$)
    \item $T\dd_W\omega=0$. 
\end{enumerate}
Then $\omega=L(\sigma)\in \Omega^k(U,\oplus_{j\geq\ell}W_j)$. 
\end{prop}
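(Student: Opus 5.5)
We plan to prove uniqueness by showing that the difference of two such forms vanishes, and then to check that $L(\sigma)$ itself satisfies both conditions. For the latter, the construction in Section \ref{BGG} gives $L(\sigma)-\sigma=\varphi_{\ell+1}+\dots+\varphi_N\in\mathcal R(T)$. By design, $T\dd_W(L(\sigma))=0$: the recursion stops exactly when the remaining term $T\dd\varphi_N$ lies in $\Omega^k(U,W_{N+1})=0$. The component bookkeeping $L(\sigma)\in\Omega^k(U,\oplus_{j\geq\ell}W_j)$ is clear from the construction. So it suffices to show that any $\omega$ satisfying (1) and (2) agrees with $L(\sigma)$.

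Set $\psi:=\omega-L(\sigma)$. Then $\psi\in\mathcal R(T_{k+1})$, since both $\omega-\sigma$ and $L(\sigma)-\sigma$ lie there. By linearity, $T\dd_W\psi=0$. Suppose $\psi\neq 0$ and decompose $\psi=\sum_j\psi_j$ with $\psi_j\in\Omega^k(U,W_j)$. Let $j_0$ be the smallest index with $\psi_{j_0}\neq0$. Since $T$ and $S$ act homogeneously in $\ell$, each $\psi_j$ lies in $\mathcal R(T)$ separately. Now $\dd$ preserves the $W$-degree and $S$ lowers it by one. Hence the component of $\dd_W\psi$ in $\Omega^{k+1}(U,W_{j_0-1})$ is exactly $S\psi_{j_0}$, since $\psi_{j_0-1}=0$ contributes nothing. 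Applying $T$, which raises the degree by one, the $W_{j_0}$-component of $T\dd_W\psi$ is $TS\psi_{j_0}+T\dd\psi_{j_0-1}=TS\psi_{j_0}$. So condition (2) forces $TS\psi_{j_0}=0$.

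The key algebraic step is that $TS$ restricts to the identity on $\mathcal R(T)$. Indeed, writing $\psi_{j_0}=T\eta$, we get $TST\eta=T\eta$ by the identity $T\circ S\circ T=T$ listed among the basic properties. Thus $\psi_{j_0}=TS\psi_{j_0}=0$, which contradicts the choice of $j_0$. Hence $\psi=0$ and $\omega=L(\sigma)$.

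The only delicate point is the homogeneity bookkeeping. We must check that taking the lowest nonzero component is legitimate, namely that $\mathcal R(T)$ splits as the direct sum of its homogeneous pieces, and that no other term of $T\dd_W\psi$ lands in degree $W_{j_0}$. Both facts follow because $s_{k,\ell}$ and $t_{k,\ell}$ are defined bidegree by bidegree. Once this is settled, the argument is the one-line identity $TST=T$.
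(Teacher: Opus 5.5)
Your proposal is correct and follows essentially the same route as the paper. You take $\psi=\omega-L(\sigma)\in\mathcal R(T)$ with $T\dd_W\psi=0$, note that the $W_{j_0}$-component of $T\dd_W\psi$ for the lowest nonzero index $j_0$ is $TS\psi_{j_0}$, and use $T\circ S\circ T=T$ to conclude $\psi_{j_0}=0$. The paper phrases this as an upward iteration rather than a minimal-counterexample argument, and it merely asserts that $L(\sigma)$ has the two properties, which you check explicitly.
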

\begin{proof}
     Since also $L(\sigma)$ has these two properties, we conclude that $\tau:=\omega-L(\sigma)$ satisfies $\tau\in\mathcal{R}(T_{k+1})$ and $T\dd_W\tau=0$. Since $\mathcal R(T_{k+1})\cap \Omega^k(U,W_0)=\{0\}$ we suppose that $r\geq 1$ is such that $\tau\in\Omega^k(U,\oplus_{j\geq r}W_j)$. Then by the first property, there is $\tilde\tau\in\Omega^{k+1}(U,\oplus_{j\geq r-1}W_j)$ such that $\tau=T\tilde\tau$ and we get $0=T\dd\tau+TST\tilde\tau$. Now the first summand lies in $\Omega^k(U,\oplus_{j>r}W_j)$, so we see that the component in $\Omega^k(U,W_r)$ of $TST\tilde\tau=T\tilde\tau=\tau$ has to vanish. So we must actually have $\tau\in \Omega^k(U,\oplus_{j\geq r+1}W_j)$, whence we can choose $\tilde\tau\in \Omega^{k+1}(U,\oplus_{j\geq r}W_j)$ and then iterate the argument. After finitely many steps, we conclude that $\tau=0$.
\end{proof}

Having the splitting operators at hand, the definition of the BGG operators becomes very easy. By construction, for $\sigma\in\Upsilon_{k,\ell}$ we get $\dd_WL(\sigma)\in\mathcal N(T_{k+1})$ and we can project orthogonally into $\Upsilon_{k+1}=\mathcal N(T_{k+1})\cap\mathcal{R}(T_{k+2})^\perp$ to define $\mathcal D(\sigma)\in\Upsilon_{k+1}$. This defines the \textit{BGG operator} 
\[
\mathcal{D}=\mathcal{D}_k \colon \Upsilon_{k} \to \Upsilon_{k+1}. 
\]
The construction implies that $\mathcal{D}(\Upsilon_{k,\ell})\subset\oplus_{j\geq\ell}\Upsilon_{k+1,j}$ and if several components in this sum are non-zero, then the BGG operator splits accordingly. The construction easily implies that the order of the component $\Upsilon_{k,\ell}\to\Upsilon_{k+1,r}$ of $\mathcal D_k$ is $r-\ell+1$.    

\subsection{Relation to the twisted complex}
The main properties of the BGG construction can now be proved by exploiting only the properties of the splitting operators proved in Proposition \ref{split}. The main observation here is that for $\sigma\in\Upsilon_k$, the form $\omega=\dd_WL(\sigma)$ satisfies $\omega-\mathcal D(\sigma)\in\mathcal R(T)$ and $\dd_W\omega=0$ and hence $T\dd_W\omega=0$. Thus Proposition \ref{split} shows that 
$$
\dd_WL_k(\sigma)=L_{k+1}\mathcal D(\sigma) \text{\ and hence \ } \dd_W\circ L_k=L_{k+1}\circ\mathcal D.
$$
This readily shows that $L_{k+2}\circ\mathcal D_{k+1}\circ\mathcal D_k=\dd_W\circ L_{k+1}\circ\mathcal D_k=\dd_W\circ\dd_W\circ L_{k}=0$, and since $L_{k+2}$ is obviously injective, we see that $\mathcal D_{k+1}\circ\mathcal D_k=0$ for any $k$, so we obtain the \textit{BGG complex} $(\Upsilon_*,\mathcal{D}_*)$. 
The relationship between the full twisted complex and the BGG complex is summarized in:
\begin{center}
\begin{tikzcd}
{}\arrow[r, "\dd_W"] & \Omega^k(U, W)\arrow[r, "\dd_W"]\arrow[d, phantom, "\rotatebox{90}{$\subset$}"] & \Omega^{k+1}(U, W)\arrow[r, "\dd_W"]\arrow[d, phantom, "\rotatebox{90}{$\subset$}"] & {} \\
{} & \mathcal{N}(T) \arrow[d] & \mathcal{N}(T) \arrow[d] & {} \\
{} & \Upsilon_k\arrow[r, dashed, "\mathcal{D}"] \arrow[ur, dashed]\arrow[bend right=60, swap]{uu}{L} & \Upsilon_{k+1} & {}
\end{tikzcd}
\end{center}

The splitting operator $L$ lifts $\Upsilon_k$ into $\mathcal{N}(T) \subset \Omega^k(U, W)$. Applying $\dd_W$ lands in $\mathcal{N}(T) \subset \Omega^{k+1}(U, W)$, and projecting back to $\Upsilon_{k+1}$ gives $\mathcal{D}$.

The equation $\dd_W\circ L=L\circ\mathcal{D}$ also says that the operators $L$ define a chain map from the BGG complex $(\Upsilon_*,\mathcal{D}_*)$ to the twisted complex $(\Omega^*(U.W),\dd_W)$ and hence there are induced maps between the cohomologies of the two complexes.  

\begin{thm*}
 The operators $L$ induce isomorphisms in cohomology.   
\end{thm*}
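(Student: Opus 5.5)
The plan is to mimic the degree-by-degree argument given for the Rumin complex. Everything will be derived from Proposition~\ref{split}, the identity $\dd_W\circ L=L\circ\mathcal D$, and the algebraic properties $S\circ T\circ S=S$ and $T\circ S\circ T=T$. The key auxiliary fact is a normalization lemma: every $\omega\in\Omega^k(U,W)$ can be modified by a $\dd_W$-exact form so that it lies in $\mathcal N(T)$ and has $T\dd_W\omega=0$.

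\textbf{Normalization lemma.} I would prove this by descending induction on the lowest row. Suppose $T\omega\neq 0$, and let $r$ be the smallest index such that the component of $T\omega$ in $\Omega^{k-1}(U,W_{r})$ is nonzero. Replace $\omega$ by $\omega-\dd_W(T\omega)_r$. Since $\dd_W=\dd+S$, the $S$-part $ST(T\omega)_r$ removes exactly the relevant part of $\omega$. Here one uses $TST=T$ together with the fact that $\dd$ preserves rows while $S$ lowers the row index and $T$ raises it. The $\dd$-part only affects components in row $r$ and higher, so the lowest nonzero row of $T\omega$ moves strictly up. After finitely many steps we reach $T\omega=0$.

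Similarly, if $T\omega=0$ but $T\dd_W\omega\neq0$, I would subtract $\dd_W$ of a suitable multiple of $T\dd_W\omega$, again working row by row. Equivalently, I would use the construction of $L$ directly: for $\omega\in\mathcal N(T)$, put $\sigma$ equal to the harmonic projection of $\omega$ onto $\Upsilon_k$. The difference $\omega-L(\sigma)$ lies in $\mathcal N(T)\cap\mathcal R(T)^{\perp\perp}$, and one must show that it is $\dd_W$ of something plus a term in $\mathcal R(T)$. This is the main technical step. The cleanest route is probably to define $\tau=T\omega'$, where $\omega'$ is the $\mathcal R(T)$-part of $\omega$ (which equals $TS\omega'$ up to a $\mathcal R(T)$ correction), and iterate over rows as above.

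\textbf{Surjectivity in cohomology.} Given $\omega$ with $\dd_W\omega=0$, normalize it to $\hat\omega=\omega+\dd_W\tau$ with $\hat\omega-\sigma\in\mathcal R(T)$ for some $\sigma\in\Upsilon_k$. Since $\dd_W\hat\omega=0$, in particular $T\dd_W\hat\omega=0$, so Proposition~\ref{split} gives $\hat\omega=L(\sigma)$. Then $L\mathcal D\sigma=\dd_W L\sigma=0$, and injectivity of $L$ yields $\mathcal D\sigma=0$. Hence $[\omega]=[L\sigma]$ lies in the image.

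\textbf{Injectivity in cohomology.} Suppose $\mathcal D\sigma=0$ and $L\sigma=\dd_W\tau$. Normalize $\tau$ to $\hat\tau=\tau+\dd_W\rho$ with $\hat\tau-\mu\in\mathcal R(T)$ for some $\mu\in\Upsilon_{k-1}$. Then $\dd_W\hat\tau=L\sigma$, and $T\dd_W\hat\tau=TL\sigma=0$ because $L\sigma\in\mathcal N(T)$. Proposition~\ref{split} therefore gives $\hat\tau=L\mu$, so $L\sigma=\dd_WL\mu=L\mathcal D\mu$. Injectivity of $L$ then gives $\sigma=\mathcal D\mu$.

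\textbf{Main obstacle.} The hard part is the normalization lemma, in particular the second stage of achieving $T\dd_W\omega=0$ while keeping $\omega-\sigma\in\mathcal R(T)$. I would attack it with the row filtration. At the lowest row $r$ where $\omega$ is not of the required form, split the row-$r$ component along $\mathcal N(s)\cap\mathcal N(t)$, $\mathcal R(s)$ and $\mathcal R(t)$, using the Hodge-type decomposition $\Lambda^k\RR^{n*}\otimes W_r=h\oplus\mathcal R(s)\oplus\mathcal R(t)$ that follows from the pseudo-inverse properties. The $\mathcal R(s)$-part is $S$ of something in row $r+1$; subtracting $\dd_W$ of that kills it while changing only rows $\geq r+1$. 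Iterating upward over finitely many rows gives the normalized form.
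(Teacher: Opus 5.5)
Your surjectivity and injectivity arguments are exactly the paper's. The first stage of your normalization lemma is also the paper's recursion for moving a form into $\mathcal N(T)$: you subtract $\dd_W$ of the lowest nonzero row component $(T\omega)_r$ and use $TST=T$. (The $S$-part there is $S(T\omega)_r=ST\omega_{r-1}$; the expression $ST(T\omega)_r$ you wrote is zero because $T\circ T=0$.) Passing from $\hat\omega\in\mathcal N(T)$ to $\hat\omega-\sigma\in\mathcal R(T)$, with $\sigma$ the harmonic projection, is immediate. Indeed, $T\circ T=0$ gives the pointwise orthogonal splitting $\mathcal N(T_k)=\mathcal R(T_{k+1})\oplus\Upsilon_k$.

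The problem is the second stage of the lemma, which you single out as the main obstacle: it is \emph{false}, so it cannot be proved.

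\begin{itemize}
\item Because $\dd_W\circ\dd_W=0$, we have $\dd_W(\omega+\dd_W\tau)=\dd_W\omega$. So $T\dd_W\omega$ is unchanged by adding any $\dd_W$-exact form, and it vanishes after modification only if it vanished before.
\item For instance, take the Hessian setting $W=\RR\oplus\RR^{n*}$ and $\omega=(0,\phi)\in\Omega^0(U,W)$ with $\phi\neq 0$ constant. Then $T\dd_W\omega=TS\omega=\phi\neq 0$, and in degree $0$ no modification is available at all.
\item Your row-by-row Hodge-decomposition procedure does not address this. Removing the $\mathcal R(s)$-component in each row just produces $\omega\in\mathcal R(S)^\perp=\mathcal N(T)$, i.e.\ it reproduces stage one.
\item Your paragraph about showing that $\omega-L(\sigma)$ is $\dd_W$-exact up to $\mathcal R(T)$ is vacuous as written, since that difference already lies in $\mathcal R(T)$. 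The stronger claim you would actually need, that $\omega-L(\sigma)$ is $\dd_W$-exact, is false in general.
\end{itemize}

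Fortunately, you never use stage two. In both applications you correctly get $T\dd_W\hat\omega=0$ for free: from $\dd_W\hat\omega=0$ for surjectivity, and from $\dd_W\hat\tau=L\sigma\in\mathcal N(T)$ for injectivity. Delete the second stage and what remains is a complete proof that coincides with the paper's.
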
 
\begin{proof}
    There is one more ingredient that we have to prove. Namely, given any form $\omega\in\Omega^k(U,W)$, there is a form $\tau\in\Omega^{k-1}(U,W)$ such that $\omega-\dd_W\tau\in\mathcal{N}(T)$. (Observe that $\Omega^0(U,W)\subset\mathcal N(T)$, so there is no problem in degree zero.) Similarly as before, this is proved recursively. Let $r\geq 1$ be such that $T\omega\in\Omega^k(U,\oplus_{j\geq r}W_j)$ and let $\tau_r$ be the component of $-T(\omega)$ in $\Omega^k(U,W_r)$. Then $T\dd_W\tau_r=T\dd\tau_r-TST\omega$ and the first summand lies in $\Omega^{k-1}(U,W_{r+1})$ while the second equals $-T\omega$. Thus we see that $T(\omega+\dd_W\tau_r)$ has vanishing component in $\Omega^{k-1}(U,W_r)$ and hence lies in $\Omega^k(U,\oplus_{j\geq r+1}W_j)$. Hence we can define $\tau_{r+1}$ as the negative of the component in $\Omega^{k-1}(U,W_{r+1})$ and then iterate the argument. In finitely many steps we obtain a form $\tau$ as required. 

\textbf{Surjectivity in cohomology.} Take $\omega\in\Omega^k(U,W)$ with $\dd_W\omega=0$ and choose $\tau\in\Omega^{k-1}(U,W)$ such that $\tilde\omega:=\omega+\dd_W\tau\in\mathcal N(T_k)$. Of course $\dd_W\omega=0$ and $[\tilde\omega]=[\omega]$ in cohomology. Denote by $\sigma\in\Upsilon_k$ the projection of $\tilde\omega$ to $\mathcal N(T)\cap\mathcal R(T)^\perp$. Then $\tilde\omega-\sigma\in\mathcal R(T)$ and since $\dd_W\tilde\omega=0$, Proposition \ref{split} shows that $\tilde\omega=L(\sigma)$ and hence $\mathcal D_k(\sigma)=0$. Hence $\sigma$ has a class in the cohomology of the BGG sequence and this class is mapped to $[\tilde\omega]=[\omega]$ in the cohomology of the twisted complex. Hence surjectivity is proved.  

\textbf{Injectivity in cohomology.} Here we have to take $\sigma\in\Upsilon_k$ such that $\mathcal{D}_k(\sigma)=0$ and suppose that $L(\sigma) = \dd_W \omega$ for some $\omega\in\Omega^{k-1}(U,W)$ (since this means that the class of $\sigma$ is mapped to zero by the map in cohomology induced by $L$). From above, we see that without loss of generality, we may assume $\omega\in\mathcal N(T)$ and we denote by $\tau\in\Upsilon_{k-1}$ its projection to $\mathcal{N}(T)\cap\mathcal{R}(T)^\perp$. Since $\dd_W\omega=L(\sigma)\in\mathcal{N}(T)$ we again obtain $\omega=L(\tau)$ and hence $L(\sigma)=\dd_WL(\tau)=L(\mathcal D(\tau))$. Injectivity of $L$ then shows that $\sigma=\mathcal D(\tau)$, so the class of $\sigma$ in cohomology is zero and injectivity follows. 
\end{proof}

\subsection{Example: the elasticity complex}\label{ex:elast}

We discuss only the point-wise operations that are the background for constructing the elasticity complex. Take $W = W_0 \oplus W_1 = \RR^{n*} \oplus \Lambda^2\RR^{n*}$. Hence $\RR^{n*}\otimes W_0$ can be viewed as the space of all bilinear forms $\RR^n\times\RR^n\to\RR$, while $W_1$ is the space of skew symmetric bilinear forms. Hence there is an obvious inclusion $s_0:W_1\hookrightarrow \RR^{n*}\otimes W_1$. For $k\geq 2$, we can view $\Lambda^k\RR^{n*}\otimes W_0$ as the space of $k+1$-linear maps $(\RR^n)^{k+1}\to\RR$, which are alternating in the first $k$ entries. Similarly, $\Lambda^{k-1}\RR^{n*}\otimes W_1$ is the space of $k+1$-linear maps $(\RR^{n*})^{k+1}\to\RR$ which are alternating in the first $k-1$ arguments and in the last two arguments. Hence there is a simple map $s_{k-1}:\Lambda^{k-1}\RR^{n*}\otimes W_0\to\Lambda^k\RR^{n*}\otimes W_1$ given by alternating a multilinear map in the first $k$ entries. This leads to the following picture:
\begin{center}
\begin{tikzcd}
\RR^{n*} & \RR^{n*} \otimes \RR^{n*} & \Lambda^2\RR^{n*} \otimes \RR^{n*} & \Lambda^3\RR^{n*}\otimes \RR^{n*} \\
\Lambda^2\RR^{n*}\arrow[ur, "s_0"] & \RR^{n*} \otimes \Lambda^2\RR^{n*}\arrow[ur, "s_1"] & \Lambda^2\RR^{n*}\otimes \Lambda^2\RR^{n*}\arrow[ur, "s_2"] & {}
\end{tikzcd}
\end{center}

As noted above, $s_0$ is injective and one can verify directly that $s_1$ is a linear isomorphism, while for all $k\geq 2$, the map $s_k$ is surjective. This readily leads to a description of the cohomology spaces $h_{k,\ell}$. Injectivity of $s_0$ implies that $h_0=h_{0,0}=\RR^{n*}$. Since $s_1$ is a linear isomorphism, $h_1=h_{1,0}=(\RR^{n*}\otimes\RR^{n*})/\mathcal R(s_0)\cong S^2R^{n*}$. For $k\geq 2$, we get $h_k=h_{k,1}=\mathcal{N}(s_k)\subset\Lambda^k\RR^{n*}\otimes\Lambda^2\RR^{n*}$. In the actual construction of the twisted complex, one has to add signs to the maps constructed here in order to ensure the relations $\dd\circ S=-S\circ \dd$.

\medskip

At this point we can start understanding why representation theory might be helpful. The maps $s_k$ that occur in this example (as well as those for the Hessian complex) are of a special type. They are particularly natural in the sense that they can be defined without choosing bases. Technically speaking, the spaces we deal with come with natural actions of the group $GL(n,\RR)$ of invertible $n\times n$-matrices and the maps $s_k$ are \textit{equivariant}, i.e.\ compatible with these actions, see below for more precise definitions. 

In these simple examples, representation theory leads to descriptions of the kernels and images of the maps $s_k$, and hence to explicit descriptions of the spaces $h_{k,\ell}$, which are obviously relevant for understanding what the resulting BGG sequences look like. There are more advanced applications of representation theory, which directly lead to
\begin{itemize}
    \item A vector space $W$ with an appropriate decomposition $W=W_0\oplus\dots\oplus W_N$. 
    \item Maps $s_{k,\ell}$ which are $GL(n,\RR)$-equivariant (which is needed to get properties like affine invariance for the subsequent constructions) and satisfy $s_{k+1}\circ s_k=0$. 
    \item A description of the resulting subspaces $h_{k,\ell}$.
\end{itemize}

\section{Some representation theory}
Representation theory is a very general concept for various types of algebraic structures and in particular for groups. For our purpose we only deal with the subgroups $O(n)$ (orthogonal matrices) and $SL(n,\RR)$ (matrices of determinant $1$) of $GL(n,\RR)$ (invertible matrices). For these cases, representation theory is closely related to linear algebra. More precisely, $GL(n,\RR)$ governs linear algebra (of $n$-dimensional spaces) while $O(n)$ governs the linear algebra of inner product spaces (of dimension $n$). The group $SL(n,\RR)$ admits a similar interpretation in terms of vector spaces with a distinguished volume form, but for us the use of $SL(n,\RR)$ is mainly of technical nature, and this interpretation is less important.  

\subsection{Matrix groups and their representations}
Matrix groups provide the most important examples of Lie groups. Their study only needs basic analysis and no methods of differential geometry. The correspondence between matrix groups and their Lie algebras actually provides beautiful examples for the methods of basic analysis. 

Continuity of the determinant function implies that $GL(n,\RR)$ is an open subset of the space $M_n(\RR)$ of $n\times n$-matrices with real entries, which can be identified with $\RR^{n^2}$. Hence all the methods of higher dimensional analysis can be applied to functions $GL(n,\RR)\to\RR^m$ for any $m$. The formal definition of a matrix group is as a subgroup $G\subset GL(n,\RR)$ which is a closed subset of the topological space $GL(n,\RR)$. This is of course satisfied for $G=O(n)$ and $G=SL(n,\RR)$, which even are closed in $M_n(\RR)$ (which is not required in the definition). Indeed, the closedness properties follow immediately from the fact that our groups preserve some ``structure'', namely the inner product respectively a volume form on $\RR^n$, which gives a description of the group in terms of continuous equations. This is a general principle that applies to many symmetry groups. 

There is a general theorem (first proved by J.\ von Neumann) stating that any matrix group $G\subset GL(n,\RR)$ is a smooth submanifold of $M_n(\RR)\cong\RR^{n^2}$. In particular, one can talk about smoothness of maps from $G$ to Euclidean spaces and to other submanifolds and hence in particular of maps between matrix groups. 

A \textit{representation} of a matrix group $G$ on a finite dimensional vector space $V$ is a smooth left action of $G$ on $V$ by linear maps. So we need a smooth map $G\times V\to V$ that we write as $(A,v)\mapsto A\cdot v$ such that $\mathbb I\cdot v=v$ for the identity matrix $\mathbb I$ and such that for any $A,B\in G$, we get $(AB)\cdot v=A\cdot(B\cdot v)$. Alternatively, we can view this as a group homomorphism $G\to GL(V)$, which is smooth in the following sense: For a choice of Basis $v_1,\dots,v_N$ for $V$ on can view the homomorphism as a map $G\to M_N(\RR)$, and this map has to be smooth. For the groups $O(n)$ and $SL(n,\RR)$ the obvious representation is the \textit{standard representation} on $\RR^n$, which is defined by $(A,v)\mapsto Av$ and corresponds to the inclusion into $GL(n.\RR)$. One of the possible interpretations of the action of invertible matrices on $\RR^n$ is to describe base changes, and the restriction elements of $O(n)$ correspond to changes between orthonormal bases. 

For the groups we consider, one should think about representations as being defined on a vector space $V$ obtained from $\RR^n$ by some functorial construction. This implies that any basis of $\RR^n$ gives rise to a basis of $V$, and the action of a matrix $A$ on $V$ describes the effect of a base change on $\RR^n$ on the induced bases of $V$. The standard examples of such constructions are built up from dualization, tensor products and passing to invariant subspaces. Here a linear subspace $E\subset V$ is called \textit{$G$-invariant} if and only if for any $A\in G$ and $v\in E$, we get $A\cdot v\in E$. Of course, we then get a representation of $G$ on $E$.

A basic example, which also illustrates the connection to linear algebra is the space $L(\RR^n,\RR^n)\cong M_n(\RR)$ of linear maps, which can also be viewed as $\RR^n\otimes\RR^{n*}$. The natural representation of $GL(n,\RR)$ on this space is given by $A\cdot X:=AXA^{-1}$ and this is well known to correspond to the effect of a base change in $\RR^n$. Parts of linear algebra like the Jordan normal form (over $\mathbb C$) answer questions of when two matrices are related by the action of a group element, in general this means understanding the orbits of a group action. The questions of representation theory are easier in general, since they are linear in nature. In particular, it turns out that the only $GL(n,\RR)$-invariant subspaces on $L(\RR^n,\RR^n)$ are the $1$-dimensional subspace of multiples of the identity and the complementary subspace of trace-free maps. Here things become more complicated by restricting to the subgroup $O(n)$, since $L(\RR^n,\RR^n)$ decomposes into a direct sum of three invariant subspaces (multiples of the identity, trace-free symmetric maps and skew symmetric maps). 

A related simple example is $V:=\RR^{n*}\otimes\RR^{n*}$, the space of bilinear maps $\RR^n\times\RR^n\to\RR$ with the representation defined by $(A\cdot b)(v_1,v_2):=b(A^{-1}v_1,A^{-1}v_2)$. Indeed it turns out that inverting the matrix in this formula is needed in order to obtain the required action property. From the definition, it follows readily, that the spaces $S^2\RR^{n*}$ of symmetric bilinear forms and $\Lambda^2\RR^{n*}$ of skew-symmetric bilinear forms are $GL(n,\RR)$-invariant subspaces of $V$. As representations of $O(n)$, $\RR^n\otimes\RR^{n*}$ and $\RR^{n*}\otimes\RR^{n*}$ are actually isomorphic, so there is again a decomposition into a direct sum of three invariant subspaces. This corresponds to the fact that over $O(n)$ a symmetric bilinear form can be uniquely decomposed into a multiple of the inner product and a trace-free symmetric form. More generally one can consider tensor products of the form $\otimes^k\RR^{n*}\otimes\otimes^\ell\RR^n$ and invariant subspaces in there defined by symmetry and skew-symmetry properties. In particular, the spaces $\Lambda^k\RR^{n*}$ carry natural representations of $GL(n,\RR)$, $SL(n,\RR)$ and $O(n)$. The general rule for tensor products is that for representations on $V$ and $W$, one obtains a representation on $V\otimes W$ by $g\cdot (v\otimes w):=(g\cdot v)\otimes(g\cdot w)$.

\subsection{Two fundamental results}

Given a representation of $G$ on $V$, we have already met the concept of an invariant subspace $E\subset V$. For any representation, the trivial subspaces $\{0\}\subset V$ and $V\subset V$ are obviously invariant. If these are the only invariant subspaces, the representation $V$ is called \textit{irreducible}.  

Another fundamental concept we need is the one of an equivariant map, which is the right definition of a morphism between representations:

Given representations of $G$ on $V$ and $W$, a linear map $f \colon V \to W$ is called \textit{$G$-equivariant} (or a \textit{intertwining map}) if $f(A \cdot v) = A\cdot f(v)$ for all $v \in V$ and all $A \in G$.   This can be equivalently formulated in terms of the natural action of $G$ on the space $L(V,W)$ of linear maps. Similarly as in the case of $\RR^n$ discussed above, the natural action of $L(V,W)$ is given by $(A\cdot f)(v):=A\cdot(f(A^{-1}\cdot v))$ for any $v\in V$ and $A\in G$. (Note that here the dot denotes three different actions/representations. In the left hand side, it denotes the representation on $L(V,W)$ we define, while in the right hand side, the first dot is the action on $W$ and the second dot the one on $V$.) Hence $f$ is equivariant if and only if $A\cdot f=0$ for any $A\in G$. This property is often phrased as $f$ being a \textit{$G$-invariant} element of $L(V,W)$. 

The first fundamental result for the groups we are interested in is referred to as \textit{complete reducibility} of representations. 

\begin{thm}\label{comp-red}
  (H.\ Weyl). Let $G$ be one of the groups $\mathrm{GL}(n, \RR)$, $\mathrm{SL}(n, \RR)$, or $O(n)$. If $V$ is a finite-dimensional representation of $G$ and $E_1 \subset V$ is a $G$-invariant subspace, then there exists a a complementary $G$-invariant subspace $E_2 \subset V$, i.e.\ we get $V = E_1 \oplus E_2$.
\end{thm}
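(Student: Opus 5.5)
For $G=O(n)$ I would use the averaging (unitarian) trick. $O(n)$ is compact: it is closed and bounded in $M_n(\RR)$. Since it is also a smooth submanifold by von Neumann's theorem, it carries a left invariant probability measure $\mu$ (Haar measure). Concretely, $\mu$ can be produced from a left invariant volume form obtained by transporting a nonzero element of $\Lambda^{top}$ of the tangent space at $\mathbb I$ via left translations. Starting from any inner product $\langle\ ,\ \rangle_0$ on $V$, define
\[
\langle v,w\rangle := \int_{O(n)} \langle A\cdot v, A\cdot w\rangle_0 \, d\mu(A).
\]
Continuity of the action and compactness make the integral well defined, and positivity is clear. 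Invariance of $\mu$ gives $\langle B\cdot v,B\cdot w\rangle=\langle v,w\rangle$ for all $B\in O(n)$. I would then take $E_2:=E_1^\perp$. If $w\in E_2$, $v\in E_1$ and $B\in O(n)$, then $\langle B\cdot w,v\rangle=\langle w,B^{-1}\cdot v\rangle=0$, so $E_2$ is invariant.

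For $SL(n,\RR)$ and $GL(n,\RR)$, which are not compact, I would use Weyl's unitarian trick in the following form. The key steps, in order, are:
\begin{enumerate}
\item Pass to the Lie algebra. A subspace of $V$ is invariant under the identity component $G_0$ if and only if it is invariant under the derived action $\mathfrak g\to L(V,V)$. This holds because $G_0$ is generated by $\exp(\mathfrak g)$, and because $X\cdot v=\frac{d}{dt}|_{t=0}\exp(tX)\cdot v$.
\item Complexify. Write $V_\CC=V\otimes\CC$ and note $\mathfrak{sl}(n,\RR)\otimes\CC=\mathfrak{sl}(n,\CC)=\mathfrak{su}(n)\otimes\CC$. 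A complex subspace is invariant under $\mathfrak{sl}(n,\RR)$ if and only if it is invariant under $\mathfrak{su}(n)$, since both conditions are equivalent to invariance under their common complexification.
\item Integrate. Restrict the Lie algebra representation of $\mathfrak{su}(n)$ on $V_\CC$ to a representation of the compact, simply connected group $SU(n)$. Average a Hermitian inner product over $SU(n)$ as in the $O(n)$ case. This yields an $\mathfrak{su}(n)$-invariant complement of $(E_1)_\CC$, and hence an $\mathfrak{sl}(n,\RR)$-invariant one.
\item Return to the real setting. Obtaining a real complement requires an extra argument. One option is to average a projection onto $E_1$ instead of an inner product; the averaged projection is real because it commutes with complex conjugation. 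Alternatively, use the characterization that a subspace is a direct summand if and only if the surjection $L(V,E_1)^{\mathfrak g}\to L(E_1,E_1)^{\mathfrak g}$ hits the identity, and check this property over $\CC$.
\item Handle the remaining group elements. $SL(n,\RR)$ is connected, so the previous steps already cover it. For $GL(n,\RR)=\RR_{>0}\times SL(n,\RR)$ up to the component $\mathrm{diag}(-1,1,\dots,1)$, the scalars $t\mathbb I$ with $t>0$ form a one-parameter group commuting with everything. The subgroup generated by $SL(n,\RR)$ and $t\mathbb I$ has finite index (at most $2$) in $GL(n,\RR)$. Since the scalars are central, it has Lie algebra $\mathfrak{gl}(n,\RR)=\mathfrak{sl}(n,\RR)\oplus\RR$, and the unitarian trick applies to it with the compact group $U(n)$ in place of $SU(n)$. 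The finite component group can then be handled by averaging a $G_0$-equivariant projection over the component representatives.
\end{enumerate}

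\textbf{Main obstacle.} The main obstacle is the integration step for $GL(n,\RR)$, specifically the action of the scalars $t\mathbb I$. These could act through non-semisimple (Jordan block) maps, e.g.\ $t\mapsto\begin{pmatrix}1&\log t\\0&1\end{pmatrix}$, and such an action admits no invariant complement. So the statement must implicitly concern representations for which this cannot happen, namely the tensorial representations built from $\RR^n$ that the paper has in mind. On these, $t\mathbb I$ acts on each tensor type $\otimes^k\RR^{n*}\otimes\otimes^\ell\RR^n$ by the scalar $t^{\ell-k}$, which is diagonalizable.

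I would therefore state explicitly that for $GL(n,\RR)$ the center must act diagonalizably, and verify this for the tensorial representations. After that, the argument reduces to the $SL(n,\RR)$ case, applied separately on each eigenspace of the scalars. For $SL(n,\RR)$ itself, the delicate point is justifying that $\mathfrak{su}(n)$-representations integrate to $SU(n)$, which uses simple connectivity. If I wanted to avoid that, I would instead cite Weyl's theorem on complete reducibility for the semisimple Lie algebra $\mathfrak{sl}(n,\RR)$.
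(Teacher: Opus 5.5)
Your argument for $O(n)$ is exactly the paper's: average an inner product, then take $E_1^\perp$. For $SL(n,\RR)$ and $GL(n,\RR)$ the paper only points to Weyl's unitary trick, and your steps 1--5 are a sound implementation of it. Your version is also more accurate than the paper's wording on one point. The compact group one actually reduces to is $SU(n)$ or $U(n)$, not $O(n)$, because an $O(n)$-invariant complement need not be $GL(n,\RR)$-invariant. For example, in $\RR^n\oplus\RR^{n*}$ the graph of index raising $\RR^{n*}\to\RR^n$ is an $O(n)$-invariant complement of $\RR^n$ but is not $GL(n,\RR)$-invariant.

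Your ``main obstacle'' is a genuine defect of the statement, not of your proof. The paper defines a representation as a smooth homomorphism $G\to GL(V)$. Under that definition, $A\mapsto\begin{pmatrix}1&\log|\det A|\\0&1\end{pmatrix}$ is a representation of $GL(n,\RR)$ on $\RR^2$ in which $\RR e_1$ is invariant but has no invariant complement. So for $GL(n,\RR)$ the theorem needs an extra hypothesis, such as semisimplicity of the action of the centre. This hypothesis holds for the tensorial representations the paper has in mind; the $SL(n,\RR)$ and $O(n)$ cases are fine as stated.

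For tensorial representations there is a shortcut, which is Weyl's original setting. The complexification of a $GL(n,\RR)$-invariant subspace of $\otimes^k\RR^{n*}\otimes\otimes^\ell\RR^n$ is $GL(n,\CC)$-invariant, so you can restrict directly to $U(n)$. Since $\mathfrak u(n)$ spans $\mathfrak{gl}(n,\CC)$ and $GL(n,\CC)$ is connected, this avoids integrating Lie algebra representations, splitting off the centre, and averaging over components.

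A few smaller points need tightening.

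(i) In the $O(n)$ step, the substitution $A\mapsto AB$ uses right invariance of $\mu$. Either transport the volume form by right translations, or invoke unimodularity of compact groups.

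(ii) In step 4, the claim that the averaged projection commutes with complex conjugation $c$ needs two facts: $c\,\rho(k)\,c=\rho(\bar k)$, and invariance of Haar measure under $k\mapsto\bar k$. A simpler route: if $P$ is any $\mathfrak{sl}(n,\RR)$-equivariant projection of $V_\CC$ onto $(E_1)_\CC$, then $\tfrac12(P+cPc)$ is again such a projection, and it is real. Also, in your alternative, the restriction map $L(V,E_1)^{\mathfrak g}\to L(E_1,E_1)^{\mathfrak g}$ is not surjective a priori; whether it hits $\mathrm{id}$ is exactly the question.

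(iii) In step 5, ``diagonalizably'' should mean diagonalizable over $\RR$. Then $t\mathbb I$ acts by a real scalar on each eigenspace, so every $SL(n,\RR)$-invariant subspace there is automatically invariant under the scalars. This is the tensorial case, where $t\mathbb I$ acts by $t^{\ell-k}$.
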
 

For $O(n)$, this result is relatively easy to prove using compactness of the group. Starting from a representation on $V$, one can start from any inner product on $V$ and then average it by integration over $O(n)$ to obtain an \textit{invariant inner product} $\langle\ ,\ \rangle$ on $V$. This means that for $v_1,v_2\in V$ and $A\in O(n)$, one has $\langle A\cdot v_1,A\cdot v_2\rangle=\langle v_1,v_2\rangle$. (For representations constructed from $\RR^n$, one can alternatively construct an $O(n)$-invariant inner product from the standard inner product on $\RR^n$.) Given an $O(n)$-invariant subspace $E_1\subset V$, one can now define $E_2:=(E_1)^\perp$ which of course is complementary to $E_1$. For $v_1\in E_1$, $v_2\in E_2$ and $A\in O(n)$ we get $\langle v_1,A\cdot v_2\rangle=\langle A^{-1}\cdot v_1,A^{-1}\cdot A\cdot v_2\rangle=\langle A^{-1}\cdot v_1,v_2\rangle=0$, since $A^{-1}\cdot v\in E_1$ by invariance. Hence $E_2\subset V$ is invariant, too. 

Representations that posses an invariant inner product are called \textit{unitary} and for such representations complete reducibility holds (by the same argument). For $GL(n,\RR)$ and $SL(n,\RR)$ finite dimensional representations are \textit{never} unitary and complete reducibility has to be proved by other methods. This can be either done by reducing to the case of $O(n)$ (Weyl's unitary trick) or by linear algebra methods. 

Given an invariant subspace $E_1\subset V$ and a resulting decomposition $V=E_1\oplus E_2$, one can iterate this. If $E_1$ has an invariant subspace, then we can decompose it and hence $V$ further and like-wise for $E_2$. This is possible until each of the summands does not admit (non-trivial) invariant subspaces. This proves:

\begin{cor*} 
Every finite-dimensional representation $V$ of $G$ (with $G$ as above) admits a decomposition into a direct sum of invariant subspaces, which are irreducible representations of $G$. 
\end{cor*}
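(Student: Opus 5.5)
The corollary follows from Theorem \ref{comp-red} by induction on $\dim V$. I would run the induction in a form that makes termination transparent, rather than just iterating decompositions until nothing splits.

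\textbf{Base cases.} If $\dim V=0$, the empty direct sum does the job. If $V\neq\{0\}$ is itself irreducible, the decomposition $V=V$ consisting of one summand works. This covers in particular the case $\dim V=1$, since a one-dimensional space has no subspaces other than $\{0\}$ and $V$.

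\textbf{Inductive step.} Suppose $\dim V=m$ and the claim holds for all representations of $G$ of dimension less than $m$. If $V$ is not irreducible, there is a $G$-invariant subspace $E_1$ with $\{0\}\neq E_1\neq V$. Theorem \ref{comp-red} gives a complementary $G$-invariant subspace $E_2$ with $V=E_1\oplus E_2$, and $E_2\neq\{0\}$ because $E_1\neq V$.
\begin{itemize}
\item Both $E_1$ and $E_2$ are finite-dimensional representations of $G$, by restricting the action; this is where invariance is used.
\item Both have dimension strictly less than $m$.
\end{itemize}
Applying the induction hypothesis to each factor gives decompositions into irreducible invariant subspaces:
\[
E_1=F_1\oplus\dots\oplus F_p,\qquad E_2=F_{p+1}\oplus\dots\oplus F_q .
\]
Each $F_j$ is invariant in $E_1$ or in $E_2$. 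Since the action on $E_i$ is the restriction of the action on $V$, each $F_j$ is also $G$-invariant as a subspace of $V$. Its irreducibility is intrinsic, because it depends only on the representation on $F_j$ itself. Concatenating the two decompositions therefore gives $V=F_1\oplus\dots\oplus F_q$, as claimed.

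There is no real obstacle. The only points needing care are that invariant subspaces and irreducibility behave well under restriction from $V$ to $E_1$ or $E_2$, and that the induction terminates. Termination is guaranteed because each splitting into two nonzero summands strictly lowers the dimension, which is exactly what the dimension induction encodes.
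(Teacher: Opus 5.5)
Your proposal is correct and follows essentially the same route as the paper, which splits $V=E_1\oplus E_2$ via Theorem \ref{comp-red} and keeps splitting the summands until all of them are irreducible. Your induction on $\dim V$ is just a rigorous version of that iteration, making explicit why the process terminates and why the pieces stay invariant in $V$.
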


The key consequence of complete reducibility is that to understand general representations, it basically suffices to understand irreducible representations. The second fundamental result we discuss, called \textit{Schur's lemma}, shows that this is a tremendous simplification, since irreducible representations show behavior that is similar to one-dimensional vector spaces. 

\begin{thm}\label{Schur} (Schur's lemma).

(1) Let $E_1, E_2$ be irreducible representations of any group $G$ and let $f \colon E_1 \to E_2$ be equivariant. Then either $f = 0$ or $f$ is an isomorphism.

(2) Let $E$ be a complex irreducible representation of any group $G$ (so $E$ is a complex vector space and $G$ acts by complex linear maps) and $f \colon E \to E$ a $\CC$-linear equivariant map. Then $f = \lambda \, \mathrm{id}$ for some $\lambda \in \CC$.
\end{thm}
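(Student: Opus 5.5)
The plan is the standard one: look at the kernel and image of $f$, and use that both are invariant subspaces.

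For part (1), we first check that $\ker f\subset E_1$ and $\im f\subset E_2$ are $G$-invariant. If $f(v)=0$, then $f(A\cdot v)=A\cdot f(v)=A\cdot 0=0$, so $A\cdot v\in\ker f$. If $w=f(v)$, then $A\cdot w=f(A\cdot v)\in\im f$. Irreducibility of $E_1$ then forces $\ker f=\{0\}$ or $\ker f=E_1$, and irreducibility of $E_2$ forces $\im f=\{0\}$ or $\im f=E_2$. If $\ker f=E_1$, then $f=0$. Otherwise $f$ is injective, so $\im f\neq\{0\}$ (irreducible representations are nonzero by convention) and hence $\im f=E_2$. Thus $f$ is bijective, and its inverse is automatically equivariant: apply $f^{-1}$ to $f(A\cdot f^{-1}(w))=A\cdot w$. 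So $f$ is an isomorphism of representations.

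For part (2), we use that $E$ is a nonzero complex vector space and, implicitly, finite dimensional, as everywhere in these notes. Then the $\CC$-linear map $f$ has an eigenvalue $\lambda\in\CC$, since its characteristic polynomial has a root. The map $f-\lambda\,\mathrm{id}$ is again $\CC$-linear and equivariant, because $\mathrm{id}$ commutes with the action. It has nontrivial kernel, so it is not an isomorphism. By part (1), applied with $E_1=E_2=E$ (the complex-linear version of the argument is identical, with invariant complex subspaces), we get $f-\lambda\,\mathrm{id}=0$.

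There is no real obstacle. The only points needing care are the finite dimensionality assumption in (2), which is what guarantees an eigenvalue, and the observation that over $\RR$ the statement of (2) fails. The latter is why the complex field is required there.
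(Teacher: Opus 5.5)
Your proof is correct and matches the paper's argument. Part (1) goes through invariance of $\ker f$ and $\im f$. In part (2), applying (1) to $f-\lambda\,\mathrm{id}$ is the same as the paper's observation that the eigenspace $\ker(f-\lambda\,\mathrm{id})$ is a nonzero invariant subspace and hence all of $E$. You also spell out two points the paper leaves implicit: that $f^{-1}$ is equivariant, and that finite dimensionality is what guarantees the eigenvalue.
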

\begin{proof}
This is extremely easy to prove. For part 1, we claim that $\mathcal N(f)\subset E_1$ and $\mathcal R(f)\subset E_2$ are $G$-invariant subspaces. If $f(v)=0$, then $f(g\cdot v)=g\cdot f(v)=0$ by linearity of the action. Likewise $g\cdot f(v)=f(g\cdot v)$. But this implies that either $\mathcal{N}(f)=E_1$ and hence $f=0$ or $\mathcal N(E)=\{0\}$ and hence $f$ is injective. In the second case $\mathcal R(f)\neq \{0\}$ and hence $\mathcal R(f)=E_2$ and $f$ is surjective. For part 2, $f$ has to have an eigenvalue and one immediately verifies that the corresponding eigenspace is a non-zero invariant subspace of $E$ and hence has to coincide with $E$. 
\end{proof}

The restriction to complex representations in part 2 is not a problem, one can deal with real representations via complexification. Moreover, the result holds in the real case provided that $f$ has a real eigenvalue. 

The two basic results in Theorems \ref{comp-red} and \ref{Schur} then allow us to analyze equivariant maps between general representations. Given representations $V$ and $W$, we first decompose into irreducibles as $V=E_1\oplus\dots\oplus E_N$, $W=F_1\oplus\dots\oplus F_M$. Given an equivariant map $f:V\to W$, we can restrict to one of the irreducible components $E_i\subset V$ and then project to an irreducible component $F_j\subset W$ to obtain an equivariant map $f_{ij}:E_i\to F_j$. By Schur's lemma, this has to be zero unless $F_j\cong E_i$. So in particular, if none of the components $F_j$ is isomorphic to $E_i$, then $f|_{E_i}=0$ and so on. What one does in practice if fixing representatives $E_i$ of the isomorphism classes of irreducible representations and then taking isomorphic irreducible components together. So one would write $V\cong E_1^{n_1}\oplus\dots\oplus E_K^{n_K}$ and the number $n_i$ is called the \textit{mutltiplicity of the irreducible representation $E_i$ in $V$}. Decomposing $W$ in a similar fashion with multiplicities $m_j$, the basic description of equivariant maps is then as follows: One only has to consider irreducible components $E_i$ that occur in both $V$ and $W$ with non-zero multiplicity. If $E_i$ occurs with multiplicities $n_i$ in $V$ and $m_i$ in $W$, the this components contributes a factor to the space of equivariant maps which is isomorphic to the space $L(\RR^{n_i},\RR^{m_i})$ of matrices of sizes $m_i\times n_i$.  

\subsection{Examples: decomposing tensor products; Young diagrams}
It is important to realize that determining the multiplicities with which irreducible representations occur in a given representation often is an algorithmic procedure that can be carried out by software. Understanding more explicitly, how irreducible components sit in a given representations can be more complicated and finding explicit formulae for projections can become tedious. Representation theory provides a number of tools to help in these problems, but a certain amount of experience and some work may be required to deal with concrete cases. 

Let us start with the space $\RR^{n*}\otimes\RR^{n*}$ of bilinear maps $\RR^n\times\RR^n\to \RR$. Symmetric bilinear maps and alternating bilinear maps evidently form invariant subspaces and one readily verifies that
\begin{equation}\label{tens2}
\RR^{n*} \otimes \RR^{n*} \cong S^2\RR^{n*} \oplus \Lambda^2 \RR^{n*},
\end{equation}
as a representation of $GL(n,\RR)$ (and hence also for the subgroups $SL(n,\RR)$ and $O(n)$). One then shows that for $SL(n,\RR)$ (and thus also for the bigger group $GL(n,\RR)$) both summands are irreducible for $n\geq 2$, which can be done by linear algebra methods. We have already noted above that as a representation of $O(n)$, $S^2\RR^{n*}$ decomposes further into multiples of the inner product and trace-free symmetric bilinear forms. One can also see subtleties coming up for $n=2$. The space $\Lambda^2\RR^{2*}$ is one-dimensional and spanned by the determinant, so for $SL(2,\RR)$ this is isomorphic to to trivial representation on $\RR$ (i.e. $A\cdot t=t$ for all $A\in G$ and $t\in\RR$), while $GL(2,\RR)$ acts non-trivially. 

We will now focus on the case of $\mathrm{GL}(n, \RR)$ (and $\mathrm{SL}(n, \RR)$) for which one can describe isomorphism classes of irreducible representations by Young diagrams (or equivalently by  highest weights). In this notation:
\begin{itemize}
\item $\square$ denotes the representation $\RR^{n*}$,
\item $\begin{array}{l} \square\!\square \end{array}$ denotes $S^2 \RR^{n*}$,
\item $\begin{array}{l} \square \\[-.5em] \square \end{array}$ denotes $\Lambda^2\RR^{n*}$.
\end{itemize}

The tensor product decomposition from \eqref{tens2} reads as:
\[
\square \otimes \square \;=\; \begin{array}{l} \square\!\square \end{array} \;\oplus\; \begin{array}{l} \square \\[-.5em] \square \end{array}
\]

Passing to the triple tensor product $\RR^{n*}\otimes\RR^{n*}\otimes\RR^{n*}$, things get a bit more complicated. Of course, in the space of trilinear maps, we have the subspaces $S^3\RR^{n*}$ and $\Lambda^3\RR^{n*}$ which intersect only in zero. Looking at dimensions, we see that these two subspaces cannot make up the full triple tensor product, so by complete reducibility, there is an invariant subspace $W\subset \RR^{n*}\otimes\RR^{n*}\otimes\RR^{n*}$ such that 
\begin{equation}\label{tens3}
\RR^{n*}\otimes\RR^{n*}\otimes\RR^{n*}=S^3\RR^{n*}\oplus W\oplus\Lambda^3\RR^{n*}.
\end{equation}
It again turns out that $S^3\RR^{n*}$ and $\Lambda^3\RR^{n*}$ are irreducible representations of $SL(n,\RR)$. 
In Young diagram notation, they are denoted by $\begin{array}{l} \square\!\square\!\square \end{array}$, and by $\begin{array}{l} \square \\[-.5em] \square \\[-.5em] \square \end{array}$, respectively. 

We can do a bit better using what we know already by observing that 
\begin{align*}
\RR^{n*}\otimes\RR^{n*}\otimes\RR^{n*}&=(\RR^{n*}\otimes\RR^{n*})\otimes\RR^{n*}=(S^2\RR^{n*}\oplus\Lambda^2\RR^{n*})\otimes\RR^{n*}\\
&=(S^2\RR^{n*}\otimes\RR^{n*})\oplus (\Lambda^2\RR^{n*}\otimes\RR^{n*}). 
\end{align*}
The two summands here consist of trilinear maps which are symmetric respectively alternating in the first two entries. In particular, $S^3\RR^{n*}\subset S^2\RR^{n*}\otimes\RR^{n*}$ and is a proper subspace in there, so there is an invariant complement $W_1$ such that $S^2\RR^{n*}\otimes\RR^{n*}=S^3\RR^{n*}\oplus W_1$. Explicitly, we can realize $W_1$ as the kernel of the complete symmetrization. In the same way, the kernel of the complete alternation is an invariant subspace $W_2\subset\Lambda^2\RR^{n*}\otimes\RR^{n*}$, which is complementary to $\Lambda^3\RR^{n*}$. Hence the subspace $W$ in \eqref{tens3} itself decomposes as $W_1\oplus W_2$. 

Proceeding further needs a bit of work. Indeed, one verifies that $W_1$ and $W_2$ are isomorphic irreducible representations, which (for $n\geq 3$) are not isomorphic to any of the other representations we have met already. The isomorphism class of these representations is denoted in Young diagram notation by $\begin{array}{l} \square\!\square \\[-.5em] \square \end{array}$, so the decompositions from above read as 
$$
\begin{array}{l} \square\!\square\end{array}\otimes \square \cong \begin{array}{l} \square\!\square\!\square \end{array}\oplus \begin{array}{l} \square\!\square \\[-.5em] \square \end{array} \quad \begin{array}{l} \square \\[-.5em] \square \end{array}\otimes\square \cong   \begin{array}{l} \square\!\square \\[-.5em] \square \end{array}\oplus \begin{array}{l} \square \\[-.5em] \square \\[-.5em] \square \end{array}, 
$$
which gives an impression on how the decompositions into irreducibles of tensor products look in more general cases. There are general rules in terms of adding the boxes of one factor to the diagram of the other factor in ``admissible places''.  The decomposition of the triple tensor product thus reads as 
\[
\square \otimes \square \otimes \square \;=\; \begin{array}{l} \square\!\square\!\square \end{array} \;\oplus\; \qty({\begin{array}{l} \square\!\square \\[-.5em] \square \end{array}} \oplus {\begin{array}{l} \square\!\square \\[-.5em] \square \end{array}}) \;\oplus\; \begin{array}{l} \square \\[-.5em] \square \\[-.5em] \square \end{array},
\]
so here we get an irreducible component with multiplicity $2$. Dealing with the corresponding summand needs a bit of care, though. The sum of the two components is a canonical subspace (the joint kernel of the complete symmetrization and the complete alternation). Decomposing it into a sum of two isomorphic representations represents a choice, analogous to a choice of basis in a two dimensional vector space. So this is not just a discrete freedom but there are continuous parameters involved. 

\medskip

Understanding basics about irreducible representations and the decompositions of tensor products into irreducibles, we can sort out the questions about kernels and images of the maps giving rise to the BGG diagrams discussed above. For the Hessian complex, the relevant maps are the complete alternations $s_k:\Lambda^k\RR^{n*}\otimes\RR^{n*}\to\Lambda^{k+1}\RR^{n*}$, so they are equivariant by construction. It turns out that $\Lambda^{k}\RR^{n*}$ is irreducible for any $k$. Hence the fact that $s_k\neq 0$ immediately implies that $s_k$ is surjective for any $k$. Now for $k=0$, both the source and the target is $\RR^{n*}$, so $s_0$ is an isomorphism. For $k>0$, $\Lambda^k\RR^{n*}\otimes\RR^{n*}$ contains $\Lambda^{k+1}\RR^{n*}$ as an invariant subspace and the kernel of the alternation provides an irreducible complementary subspace. For $k=2$, we obtain the representation $\begin{array}{l} \square\!\square \\[-.5em] \square \end{array}$, in higher degrees generalizations of this with $k$ boxes in the first column. 

For the elasticity complex, the maps are $s_k:\Lambda^k\RR^{n*}\otimes\Lambda^2\RR^{n*}\to\Lambda^{k+1}\RR^{n*}\otimes\RR^{n*}$ given by alternating in the first $k+1$ entries. For $k=0$, this is the inclusion of $\Lambda^2\RR^{n*}$ to $\RR^{n*}\otimes\RR^{n*}$, so $s_0$ is injective. For $k=1$ we get $s_1:\RR^{n*}\otimes\Lambda^2\RR^{n*}\to \Lambda^2\RR^{n*}\otimes\RR^n$, so these clearly are isomorphic representations. From above, we know that they split into a direct sum of two irreducible components, and one only has to check that $s_1$ in non-zero on one non-zero element in each of the two components to verify that $s_1$ is an isomorphism. We also know from above that also for $k\geq 2$, the target space $\Lambda^{k+1}\RR^{n*}\otimes\RR^{n*}$ splits into two irreducible components. On the other hand, $\Lambda^k\RR^{n*}\otimes\Lambda^2\RR^{n*}$ splits into three irreducible components. Two of these are the ones that occur in $\Lambda^{k+1}\RR^{n*}\otimes\RR^{n*}$, the third irreducible component is non-isomorphic to these two and hence has to be contained in the kernel of $s_k$. Checking that $s_k$ is non-zero on one element in each of the first two components, one thus obtains that $s_k$ is surjective and its kernel coincides with the third irreducible component mentioned above. 

A similar analysis applies to the other complexes obtained from multiforms. In these cases $W=W_0\oplus W_1$ with $W_0=\Lambda^\ell\RR^{n*}$ and $W_1=\Lambda^{\ell+1}\RR^{n*}$. The maps 
$$s_k:\Lambda^k\RR^{n*}\otimes\Lambda^{\ell+1}\RR^{n*}\to\Lambda^{k+1}\RR^{n*}\otimes\Lambda^\ell\RR^{n*}$$
are again induced by alternation in the first $k+1$ entries and thus equivariant.  

\subsection{Lie algebras and infinitesimal representations}
In the setting of Lie groups so in particular for matrix groups, finite dimensional representation theory is usually not done on the level of the group. In both settings, one can linearize the theory by passing to the Lie algebra, which looses almost no information and comes with a lot of technical simplification. Moreover, the constructions of equivariant maps that can be used in BGG theory actually comes from representations of Lie algebras, so we discuss the passage in some detail. In the setting of matrix groups, this just requires analysis on submanifolds of $\RR^N$ and actually is a beautiful application of differential calculus.  

We have already noted that a matrix group $G\subset GL(n,\RR)$ automatically is a smooth submanifold of $M_n(\RR)=\RR^{n^2}$, and since $G$ is a subgroup, it contains the unit matrix $\mathbb{I}$. The \textit{Lie algebra} $\mathfrak{g}$ of $G$ is then defined as the tangent space $T_{\mathbb{I}}G$ to $G$ at the unit matrix. General submanifold theory defines this tangent space via derivatives $c'(0)$ of local curves $c$ with $c(0)=\mathbb{I}$ that have values in $G$. A first important step is to prove that there is an alternative characterization of $\mathfrak{g}$ using the matrix exponential, which we write as $e^X\in GL(n,\RR)$ for $X\in M_n(\RR)$, namely
\[
\mathfrak{g} \coloneqq \mathrm{T}_{\mathbb{I}} G = \qty{X \in \mathrm{M}_n(\RR) : \ee^{tX} \in G,\; \forall t \in \RR}.
\]
For our main examples, the Lie algebras are
\[
\mathfrak{gl}(n, \RR) = \mathrm{M}_n(\RR), \quad \mathfrak{sl}(n, \RR) = \qty{X : \tr X = 0}, \quad \mathfrak{so}(n) = \qty{X : X^t = -X}.
\]
For $X\in M_n(\RR)$ and $A\in GL(n,\RR)$, one immediately verifies that $e^{AXA^{-1}}=Ae^XA^{-1}$. For a matrix group $G$ with Lie algebra $\mathfrak{g}$, this immediately implies that for $X\in\mathfrak{g}$ and $A\in G$, we get $AXA^{-1}\in\mathfrak{g}$.
The map $(A,X)\mapsto AXA^{-1}$ defines the adjoint action of $G$ on $\mathfrak g$, which defines a representation on the vector space $\mathfrak{g}$. Using $AXA^{-1}\in\mathfrak{g}$ for $A=e^{tY}$ with $Y\in\mathfrak{g}$ and differentiating at $t=0$, one concludes that for $X,Y\in\mathfrak{g}$, the commutator $[X,Y]:=XY-YX$ lies in $\mathfrak g$ and this defines the \textit{Lie bracket} $[\ ,\ ]:\mathfrak{g}\times\mathfrak{g}\to\mathfrak{g}$. This makes $\mathfrak{g}$ into a (finite-dimensional) Lie algebra (in the sense of algebra). 

Now suppose that $G, H$ are matrix groups with Lie algebras $\mathfrak{g}$ and $\mathfrak{h}$ and let $\varphi \colon G \to H$ be a smooth group homomorphism. Then of course $\varphi(\mathbb{I})=\mathbb{I}$ and hence the derivative of $\varphi$ in $\mathbb{I}$ is a linear map
\[
\varphi':=T_{\mathbb I}\phi \colon \mathfrak{g} \to \mathfrak{h}.
\]
Using the characterization of $e^X$ as a solution of a system of first order ODE, one concludes that for $X\in\mathfrak{g}$ one gets $\varphi(e^X)=e^{\varphi'(X)}$. This in turns easily implies that for $A\in G$, we get $\varphi'(AXA^{-1})=\varphi(A)\varphi'(X)\varphi(A)^{-1}$, which in turn shows that for $Y\in\mathfrak{g}$, we get 
$\varphi'([X, Y])=[\varphi'(X), \varphi'(Y)]$. Thus, $\varphi':\mathfrak{g}\to\mathfrak{h}$ is a homomorphism of Lie algebras, which often implies strong restrictions on the possible maps $\varphi'$. The equation $\varphi(e^X)=e^{\varphi'(X)}$ has another crucial consequence. The image $e^{\mathfrak{g}}$ of the matrix exponential contains an open neighborhood of $\mathbb{I}$ in $G$ and if $G$ is connected, it is easy to see that this neighborhood generates $G$ as a group. This implies that if $G$ is connected, then $\varphi$ is uniquely determined by $\varphi'$. 

\medskip 

Viewing a representation of $G$ on $V$ as a smooth homomorphism $\rho \colon G \to \mathrm{GL}(V)$ the derivative defines a Lie algebra homomorphism 
\[
\rho' \colon \mathfrak{g} \to \mathfrak{gl}(V) = L(V, V).
\]
As a linear map, $\rho'$ is equivalent to a bilinear map $\mathfrak{g} \times V \to V$, which we write as $(X, v) \mapsto X * v$ to distinguish from group actions. In this picture, the condition that $\rho'$ is a homomorphism of Lie algebras is equivalent to 
\[
[X, Y]* v = X* (Y*v) - Y*(X * v) \qquad \forall X,Y\in\mathfrak{g}.
\]
Notice that in this definition of a representation of a Lie algebra, no smoothness requirement occurs, this is pure linear algebra. The concepts of invariant subspaces, irreducibility, and equivariant maps can obviously be applie to Lie algebra representations.  

Of course, this implies that constructions for group representations lead to constructions for Lie algebra representations (which then easily extend to Lie algebra representations in the algebraic sense). One just has to be careful that the passage to the Lie algebra involves a differentiation, which changes the explicit formulae. The simplest way to compute infinitesimal representations is the observe that for $X\in\mathfrak g$ and $v\in V$, we get $X*v=\frac{d}{dt}|_{t=0}e^{tX}\cdot v$. For example for representations on $V$ and $W$ and the induced group representation on $L(V,W)$ we get $(e^{tX}\cdot f)(v)=e^{tX}\cdot f(e^{-tX}\cdot v)$ and differentiating this at $t=0$, we get 
$$
(X*f)(v)=X*(f(v))-f(X*v).  
$$
Similarly, for the tensor product $V\otimes W$, we get 
\[
X* (v \otimes w) = (X*v) \otimes w + v \otimes (X*w).
\]

There are several results that make sure that the representation theory of a connected matrix group $G$ is equivalent to the representation theory of its Lie algebra $\mathfrak{g}$. For our purposes the most important results are

\begin{thm*}
    Let $V$ be a representation of a connected group $G$. Then:

(1) $E \subset V$ is $G$-invariant $\Longleftrightarrow$ $E$ is $\mathfrak{g}$-invariant. In particular, $V$ is irreducible as a representation of $G$ if an only if it is irreducible as a representation of $\mathfrak{g}$. 

(2) A linear map $f \colon V_1 \to V_2$ is $G$-equivariant $\Longleftrightarrow$ $f$ is $\mathfrak{g}$-equivariant.
\end{thm*}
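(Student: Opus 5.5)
Both parts come down to two facts about the representation $\rho\colon G\to GL(V)$ and its derivative $\rho'$, which are already in the paper. The first is the formula $\rho(e^{tX})=e^{t\rho'(X)}$ for $X\in\mathfrak g$, $t\in\RR$. The second is that for connected $G$ the image $e^{\mathfrak g}$ contains a neighbourhood of $\mathbb I$ which generates $G$, so every $A\in G$ is a finite product $e^{X_1}\cdots e^{X_m}$ with $X_i\in\mathfrak g$. Each equivalence then has a ``group $\Rightarrow$ algebra'' direction, proved by differentiating, and an ``algebra $\Rightarrow$ group'' direction, proved by exponentiating and using connectedness.

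For (1), suppose first that $E$ is $G$-invariant and take $X\in\mathfrak g$, $v\in E$. The curve $t\mapsto e^{tX}\cdot v$ takes values in $E$, and $E$ is a closed linear subspace of the finite dimensional space $V$. Hence its derivative at $t=0$, which is $X*v$, also lies in $E$.

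Conversely, suppose $E$ is $\mathfrak g$-invariant. Then $E$ is invariant under the linear map $\rho'(X)$, so it is invariant under every power $\rho'(X)^k$. It is therefore invariant under every partial sum of the exponential series, and since $E$ is closed, under the limit $e^{\rho'(X)}=\rho(e^X)$. Writing an arbitrary $A\in G$ as $e^{X_1}\cdots e^{X_m}$ then gives $A\cdot E\subset E$. The statement about irreducibility follows at once, because both notions are defined by the absence of nontrivial invariant subspaces.

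For (2), I would reduce to (1) or argue in the same style. One route uses the representation on $L(V_1,V_2)$. As noted in the paper, $f$ is $G$-equivariant exactly when $A\cdot f=f$ for all $A\in G$, and by the formula computed above, $f$ is $\mathfrak g$-equivariant exactly when $X*f=0$ for all $X\in\mathfrak g$. If $f$ is $G$-invariant, differentiating $e^{tX}\cdot f=f$ at $t=0$ gives $X*f=0$. Conversely, if $X*f=0$, then $\frac{d}{dt}(e^{tX}\cdot f)=e^{tX}\cdot(X*f)=0$, so $e^{tX}\cdot f=f$ for all $t$; equivalently, $e^{t\rho'(X)}$ fixes every vector that $\rho'(X)$ kills, by the power series. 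Generation of $G$ by exponentials then gives $A\cdot f=f$ for all $A$.

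A second route, more elementary, applies (1) to the direct sum $V_1\oplus V_2$ and the graph $\Gamma_f=\{(v,f(v))\}$. Then $f$ is $G$-equivariant iff $\Gamma_f$ is $G$-invariant iff $\Gamma_f$ is $\mathfrak g$-invariant iff $f$ is $\mathfrak g$-equivariant, and I would probably present this one.

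The only real obstacle is the generation step: that a neighbourhood of $\mathbb I$ generates a connected topological group, and that $e^{\mathfrak g}$ contains such a neighbourhood. The paper asserts both facts, so I would just invoke them. If I had to justify the first, I would note that the subgroup generated by a neighbourhood is open, hence also closed, because its complement is a union of cosets. Connectedness then forces it to equal $G$.
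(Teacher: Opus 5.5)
Your proposal is correct and matches the argument the paper has in mind. The paper states this theorem without a written proof, but the two ingredients it sets up just before it are $\varphi(e^X)=e^{\varphi'(X)}$ and the fact that $e^{\mathfrak g}$ contains a neighbourhood of $\mathbb I$ which generates a connected $G$; these are exactly what you use for the ``differentiate'' and ``exponentiate'' directions. Your graph trick, which reduces (2) to (1) via $\Gamma_f\subset V_1\oplus V_2$, neatly avoids the representation on $L(V_1,V_2)$. Incidentally, where the paper writes $A\cdot f=0$ it should read $A\cdot f=f$, as you correctly wrote.
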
 

This reduces the verification of irreducibility or of equivariance (a condition for all $g \in G$) to the verification of a linear condition (for all $X \in \mathfrak{g}$), which is typically much simpler.  

This result does not directly apply to the groups $GL(n,\RR)$ and $O(n)$ which each have two connected components. Still one can get a lot of information from the Lie algebra, and one usually just has to check how one element from the second connected components act to sort things out.  

\subsection{Remarks on the literature} 

As stated above, most texts on representation theory work mainly or exclusively in the setting of Lie algebra. A standard text in that direction is \cite{Fulton-Harris}, which also contains information on the relation between representation theory of $\mathfrak{gl}(n,\CC)$ and representation theory of the permutation group $\mathfrak{S}_n$, which both are often described in terms of Young diagrams. It should be pointed out, though, that the book starts with lots of examples, while the general theory as discussed above, is deferred to later parts of the book. Also, texts on Lie algebra representations usually contain quite a bit of material on the structure theory of semi-simple Lie algebras (roots, Dynkin diagrams, etc.) which are not really needed to deal with the concrete examples. Here the examples as treated in \cite{Fulton-Harris} are very useful. 

The basic Lie group / Lie algebra correspondence is discussed in most introductory texts on differential geometry, for example in Chapters 7 and 8 of \cite{Lee}, but this needs abstract smooth manifolds. While matrix groups provide beautiful examples and applications for the theory of submanifolds, there are only few textbooks in this setting, notable exceptions are the books \cite{Baker} and \cite{Tapp}. There are also several textbooks on Lie groups, for example \cite{Adams} which focuses on compact Lie groups and \cite{Knapp}, which also contains substantial amount of Lie algebra theory and preparation for the theory of infinite dimensional representations. So most of the material in that direction probably contains way too much material for a first introduction. 

\section{BGG diagrams from  representation theory}

The BGG constructions that are relevant for applied mathematics (so far) belong to two families. One of these families comes from projective differential geometry, the other from conformal differential geometry. We won't really go into these geometric aspect, for our purposes it suffices to know that for BGG sequences in dimension $n$, these are based on the two Lie groups $SL(n+1,\RR)$ and $O(n+1,1)$, respectively. Here we only discuss the examples related to $SL(n+1,\RR)$, the other one is discussed in my workshop talk \cite{Cap}.    

To set the stage, recall that we need 
\begin{enumerate}
    \item A representation $W$ of $SL(n,\RR)$ decomposed as $W=W_0\oplus\dots\oplus W_N$ into a direct sum of invariant subspaces. 
    \item $SL(n,\RR)$-equivariant maps $s_{\ell}:\RR^n\times W_{\ell}\to W_{\ell-1}$, such that $s_{\ell-1}(v_1,s_\ell (v_2,w))=s_{\ell-1}(v_2,s_{\ell}(v_1,w))$ for all $v_1,v_2\in\RR^n$ and $w\in W_\ell$. 
\item Viewing these maps as $s_{0,\ell}:W_\ell\to \RR^{n*}\otimes W_{\ell-1}$, we need equivariant extensions $s_{k,\ell}:\Lambda^k\RR^{n*}\otimes W_{\ell}\to\Lambda^{k+1}\RR^{n*}\otimes W_{\ell-1}$ such that $s_{k+1,\ell-1}\circ s_{k,\ell}=0$ for all $k,\ell$. 
\item We also need that for the corresponding maps $S_k$ on $W$-valued differential forms, $d+S_k$ corresponds to the covariant exterior derivative with respect to the linear connections $d+S_0$. 
\end{enumerate}
The commutativity condition in (2) makes sure that $d+S_0$ defines a flat connection on the trivial vector bundle $\RR^n\times W$. The requirement of equivariancy in (2) and (3) is motivated by the fact that one should view $W$ as inducing a natural vector bundle on $\RR^n$ rather than just being a vector space and this ties things in with differential forms. It is needed to get properties like affine invariance of the resulting sequences and it implies that things extend to manifolds without problems.

\subsection{Constructions of $W$ and the maps $s_{k,\ell}$ from representation theory}

The starting point for our considerations is to decompose $\RR^{n+1}=\RR\oplus\RR^n$ into the subspaces spanned by the first vector in the standard basis and by the remaining basis vectors, respectively. Correspondingly, we get an inclusion 
$G_0:=SL(n, \RR) \hookrightarrow SL(n+1, \RR)=:G$ defined by mapping a matrix $A$ to the block matrix $\begin{pmatrix}
    1 & 0\\ 0 & A\end{pmatrix}$. Looking at the Lie algebra $\mathfrak{g}=\mathfrak{sl}(n+1,\RR)$ decomposed into blocks in a similar way, we get 
\[
\mathfrak{sl}(n+1, \RR) = \qty{\mqty(-\tr(X) & \lambda \\ v & X) : v \in \RR^n,\; \lambda \in \RR^{n*},\; X \in \mathfrak{gl}(n, \RR)}.
\]
The Lie algebra of the subgroup $G_0$ in this picture corresponds to matrices with $v=0$, $\lambda=0$ and $\tr(X)=0$. Moreover, mapping $v\in\RR^n$ to the matrix $\left(\begin{smallmatrix} 0 & 0\\ v & 0 \end{smallmatrix}\right)$, we see that we can naturally view $\RR^n$ as a linear subspace of $\mathfrak{g}$. Observe that any two matrices of this form commute. To simplify notation, we will denote the block matrix in $SL(n+1,\RR)$ corresponding to $A\in SL(n,\RR)$ again by $A$ and the block matrix in $\mathfrak g$ corresponding to $v\in\RR^n$ again by $v$. In this language, the commutativity observed above reads as $[v_1,v_2]=0$ for any $v_1,v_2\in\RR^n$. Moreover, we can explicitly compute the adjoint action of $A$ on $v$, which we write as $\mathrm{Ad}(A)(v)$, via 
$$
\begin{pmatrix} 1 & 0\\ 0 & A \end{pmatrix}\begin{pmatrix} 0 & 0 \\ v & 0 \end{pmatrix}\begin{pmatrix} 1 & 0\\ 0 & A^{-1} \end{pmatrix}=\begin{pmatrix} 0 & 0 \\ Av & 0 \end{pmatrix}\begin{pmatrix} 1 & 0\\ 0 & A^{-1}\end{pmatrix}=\begin{pmatrix} 0 & 0 \\ Av & 0 \end{pmatrix},
$$
so $\mathrm{Ad}(A)(v)=Av$. 

Finally, we consider the so-called \textit{grading element}
\[
E = \mqty(\frac{1}{n+1} & 0 \\ 0 & -\frac{n}{n+1} \mathbb{I}) \in \mathfrak{g}.
\]
This acts diagonalizably on $\RR^{n+1}$ and hence on any representation of $\mathfrak{g}$ constructed from $\RR^{n+1}$. Alternatively, there are general results showing that $E$ acts diagonalizably on any finite-dimensional representation $W$ of $\mathfrak{g}$. On $\RR^n$, the two eigenvalues differ by $1$ and one proves that the situation is similar for any finite dimensional irreducible representation $W$: The eigenvalues are real numbers of the form $\lambda_0, \lambda_0 + 1, \lambda_0 + 2, \ldots, \lambda_0 + N$ for some $\lambda_0\in\RR$ and $N\in\mathbb{N}$. This provides a decomposition $W = W_0 \oplus W_1 \oplus \cdots \oplus W_N$. Similarly as above, one immediately verifies that for $A\in SL(n,\RR)$, we get $\mathrm{Ad}(A)(E)=E$. Finally for $v\in\RR^n$, we get $[E,v]=-v$. 

Now we only need one more piece of input on representations of matrix groups which easily follows from the definition of the infinitesimal representation. Take a representation of $G$ on $W$ denoted by $\cdot$ and denote the infinitesimal action by $*$, then for $\mathcal A\in G$ and $\mathcal X\in\mathfrak{g}$ we get $\textrm{Ad}(\mathcal{A})(\mathcal{X})=\mathcal{A}\mathcal{X}\mathcal{A}^{-1}$ and hence $e^{t\mathrm{Ad}(\mathcal{A})(\mathcal{X})}=\mathcal{A}e^{t\mathcal{X}}\mathcal{A}^{-1}$. Acting with this on $w\in W$ and differentiating at $t=0$, linearity of the action of $\mathcal A$ readily implies that 
\begin{equation}\label{equivar}
    \mathrm{Ad}(\mathcal A)(\mathcal{X})*w=\mathcal{A}\cdot (\mathcal{X}*(\mathcal{A}^{-1}\cdot w)).
\end{equation}  
Applying this in our setting to an irreducible representation $W=W_0\oplus\dots\oplus W_N$, $\mathcal{A}=A^{-1}\in G_0$ and $\mathcal X=E$, we get $E*w=A^{-1}\cdot (E*(A\cdot w)$ and hence $A\cdot (E*w)=E*(A\cdot w)$. This shows the the action of $A$ preserves each eigenspace of $E$, so each of the subspaces $W_\ell\subset W$ is $G_0$-invariant. 

Likewise $[E,v]=-v$ implies that $E*(v*w)-v*(E*w)=-v*w$. For $w\in W_\ell$, the second term in the left hand sice becomes $-(\lambda_0+\ell)v*w$ an bringing this to the other side shows that $v*w\in W_{\ell-1}$. Finally, applying \eqref{equivar} to $\mathcal A=A$ and $\mathcal X=v$, we get $Av*w=A\cdot (v*(A^{-1}\cdot w))$ or, equivalently, $Av*(A\cdot w)=A\cdot (v*w)$ which exactly means that $*:\RR^n\times W\to W$ is $SL(n,\RR)$-equivariant. So interpreting this as a map $s_0:W\to\RR^{n*}\otimes W$  this has all the properties we required above. 

\medskip

The extension to higher form degrees is provided by a general gadget from Lie algebra theory. Given a Lie algebra $\mathfrak g$ and a representation of $\mathfrak{g}$ on $V$, there is a general concept of the \textit{Lie algebra cohomology of $\mathfrak{g}$ with coefficients in $V$}, as well as a so-called \textit{standard complex} that computes this cohomology. The space in this standard complex are the spaces $\Lambda^k\mathfrak{g}^*\otimes V$ of $k$-linear, alternating maps $\mathfrak{g}^k\to V$ for $k=0,\dots,\dim(\mathfrak{g)}$, and the differential 
$$
\partial:\Lambda^k\mathfrak{g}^*\otimes V\to\Lambda^{k+1}\mathfrak{g}^*\otimes V 
$$
is defined by 
\begin{equation}\label{partial}
\begin{aligned}
    \partial\varphi(X_0,&\dots,X_k):=\textstyle\sum_{i=0}^k(-1)^iX_i*\varphi(X_0,\dots,\widehat{X_i},\dots,X_k)\\
    +&\textstyle\sum_{i<j}(-1)^{i+j}\varphi([X_i,X_j],X_0,\dots,\widehat{X_i},\dots,\widehat{X_j},\dots,X_k)
\end{aligned}
\end{equation}
for $\varphi:\mathfrak{g}^k\to V$ and $X_0,\dots,X_k\in\mathfrak{g}$ and with the hats denoting omission of arguments. It is then easy to prove in general that $\partial\circ\partial=0$, so one indeed obtains a complex, whose cohomology spaces are denoted by $H^*(\mathfrak{g},V)$. 

We apply this in a situation which is rather strange from the point of view of algebra, namely to the abelian Lie algebra $\RR^n$ (with trivial bracket) and the restriction to this subalgebra of $\mathfrak{sl}(n+1,\RR)$ of an irreducible representation $W$ of $\mathfrak{sl}(n+1,\RR)$. Hence in the notation introduced above, \eqref{partial} simplifies to 
$$
s_k(\varphi)(v_0,\dots,v_k)=\textstyle\sum_{i=0}^k(-1)^iv_i*\varphi(v_0,\dots,\widehat{v_i},\dots,v_k). 
$$
Equivariancy of the map $*:\RR^n\times W\to W$ easily implies that each of the maps $s_k$ is $SL(n,\RR)$-equivariant. Moreover, 
the general theory implies that $s_k\circ s_{k-1}=0$ and hence $\mathcal R(s_{k-1})\subset\mathcal{N}(s_k)\subset\Lambda^k\RR^{n*}\otimes W$ are $SL(n,\RR)$-invariant subspaces. In particular, each of the quotient spaces $H^k(\RR^m,W)$ naturally is a representation of $SL(n,\RR)$. It also follows from general results that for the induced maps $S_k$ on differential forms, $\dd+S_k$ is the covariant exterior derivative in degree $k$ induced by the connection $\DD=\dd+S_0$. Thus we have obtained all the properties required for the maps $s_k$ to induce nice $S$-operators.

\subsection{Examples}
Let us briefly outline how this construction leads to the examples discussed above. Recall the splitting $\RR^{n+1}=\RR\oplus\RR^n$ of the standard representation of $SL(n+1,\RR)$ with the components spanned by the elements $e_0$ and $e_1,\dots,e_n$ of the standard basis. The eigenvalues of $E$ on the summands are $\frac1{n+1}=\frac{-n}{n+1}+1$ and $\frac{-n}{n+1}$, respectively. Putting $W:=\RR^{(n+1)*}$, the corresponding eigenspaces are spanned by the elements $e^0$ and $e^1,\dots,e^{n+1}$ of the dual basis, with the negatives of the eigenvalues of the components in $\RR^{n+1}$, so $W=W_0\oplus W_1$ with $W_0=\RR$ and $W_1=\RR^{n*}$ as representations of $G_0=SL(n,\RR)$. The action of $v\in\RR^n\subset\mathfrak g$ on $(a,z)\in\RR^{n+1}$ by definition is given by $v*(a,z)=(0,av)$, so on the dual we get $v*\binom{t}{\lambda}=\binom{-\lambda(v)}{0}$. Interpreting this as a map $s_0:W_1\to \RR^{n*}\otimes W_0$, we get $s_0(\lambda)(v)=-\lambda(v)=v$, so $s_0=-id_{\RR^{n*}}$. For higher degrees, we by definition get 
$$
s_k(\varphi)(z_0,\dots,z_k)=\textstyle\sum_{i=0}^k(-1)^iz_i*\varphi(z_0,\dots,\widehat{z_i},\dots z_k)=-\textstyle\sum_{i=0}^k(-1)^i\varphi(z_0,\dots,\widehat{z_i},\dots z_k)(z_i). 
$$
Viewing $\varphi$ as a $k+1$-linear map which is alternating in the first $k$ entries, this is just $(-1)^{k+1}$ times the alternation of $\varphi$. Thus up to an overall minus sign (which does not change the outcome), this coincides with the maps inducing the twisted complex that leads to the Hessian complex as discussed in Section \ref{ex:Hess}. 

Starting with $W:=\Lambda^2\RR^{(n+1)*}$, we get a decomposition $W=W_0\oplus W_1$ with $W_0$ spanned by the vectors $e^0\wedge e^i$ with $i\geq 1$ and $W_1$ spanned by the vectors $e^i\wedge e^j$ with $1\leq i<j$, so $W_0\cong\RR^{n*}$ and $W_1\cong\Lambda^2\RR^{n*}$. The action of $\RR^n$ on $W$ is characterized by a Leibniz rule with respect to the wedge product, which shows that $v*(e^0\wedge e^i)=0$ and $v*(e^i\wedge e^j)=-e^i(v)e^j+e^j(v)e^i$. Since this is the negative of $e^i\wedge e^j$ applied to $V$, it corresponds to minus the inclusion as a map $s_0:\Lambda^2\RR^{n*}\to\RR^{n*}\otimes\RR^{n*}$. Similarly as discussed above, the induced maps $s_k:\Lambda^k\RR^{n*}\otimes\Lambda^2\RR^{n*}\to\Lambda^{k+1}\RR^{n*}\otimes\RR^{n*}$ are, up to a multiple, given by the alternation over the first $k+1$ entries, so we recover the maps leading to the elasticity complex, see Section \ref{ex:elast}.   

This extends to higher exterior powers $\Lambda^\ell\RR^{(n+1)*}$ which decompose as $\Lambda^{\ell-1}\RR^{n*}\oplus\Lambda^\ell\RR^{n*}$. Again the maps are induced by inclusions and alternations over the first $k+1$ arguments, viewed as a map $\Lambda^k\RR^{n*}\otimes\Lambda^\ell\RR^{n*}\to\Lambda^{k+1}\RR^{n*}\otimes\Lambda^{\ell-1}\RR^{n*}$. These lead to the BGG diagrams obtained from double forms.   

\subsection{Remarks on Kostant's theorem and relations to advanced representation theory}
Starting from $W$ and the maps $s_k$, the spaces occurring in the resulting BGG sequence on $U\subset\RR^n$ are the space $C^\infty(U,h_k)$, where $h_k=\mathcal{N}(s_k)\cap\mathcal{R}(s_{k-1})^\perp$, see Section \ref{BGG}. Now $h_k$ projects isomorphically onto the cohomology space $\mathcal{N}(s_k)/\mathcal{R}(s_{k-1})=H^k(\RR^n,W)$ and of course, the projection from $\mathcal{N}(s_k)$ to this quotient is $SL(n,\RR)$-equivariant. Hence we can understand the spaces $h_k$ by understanding the Lie algebra cohomology spaces $H^k(\RR^n,W)$.

It is important to point out here, that Lie algebra cohomology theory does not provide helpful input for this. On the one hand, algebraic interpretations of Lie algebra cohomologies exist only for low degrees and/or special representations. On the other hand, as we already noted above, the setup of the abelian Lie algebra $\RR^n$ acting on an irreducible representation $W$ of $SL(n+1,\RR)$ is rather bizarre from an algebraic point of view. Nonetheless, there is a result in representation theory which, as a special case, provides a complete, algorithmic description of the cohomology spaces $H^k(\RR^n,W)$ we need here. This is known as \textit{Kostant's version of the Bott-Borel-Weil theorem} from 1961, see \cite{Kostant}.  For a precise understanding, it is important that the representations we deal with actually extend to the subgroup $GL(n,\RR)\subset SL(n+1,\RR)$ corresponding to matrices of the form $\begin{pmatrix}\det(A)^{-1} & 0 \\ 0 & A \end{pmatrix}$. Already for the standard representation $\RR^{n+1}$ of $SL(n+1,\RR)$ this causes a subtle change compared to $SL(n+1,\RR)$ since the component spanned by the first basis vector now is not a trivial representation but corresponds to a density bundle. This helps in distinguishing representations in different degrees which are isomorphic as representations of the subgroup $SL(n,\RR)$. 

While the proof of Kostant's theorem requires only standard finite dimensional representation theory, even the formulation of the result lies quite a bit beyond the scope of these lectures. It needs the description of irreducible representations in terms of highest weights, the definition of the Weyl group of a simple Lie algebra (which is the permutation group $\mathfrak S_n$ in the case of $\mathfrak{sl}(n,\RR)$) and its action on weights. The general result states that each of the cohomology spaces is a direct sum of irreducible representations, which, even across different degrees, are non-isomorphic, and gives an algorithm for computing the highest weights of these irreducible representations in terms of the highest weight of the initial representation $W$. On this level, there are even computer implementations that compute the highest weights, but some understanding of representation theory is needed to see how these representations can be realized as subrepresentations of $\Lambda^k\RR^{n*}\otimes W$ for the appropriate $k$. Remarkably, the number of irreducible components that occur in $H^k(\RR^n,W)$ is independent of $W$, it is just the isomorphism types that change. Hence there is a uniform shape for all BGG sequences. It is a very strong an helpful part of the result that each of the irreducible components occurring in $H^k(\RR^n,W)$, occur (as representations of $GL(n,\RR)$) with multiplicity one in the representation $\Lambda^*\mathfrak{g}^*\otimes W$, so there is just one way how to realize them as subrepresentations. 

There are several simplifications of Kostant's general result in the case we need, which are mainly based on a better understanding of the Weyl group and the so called \textit{Hasse diagram} that describes the shape of the BGG sequences. In particular, in our case each of the representations $H^k(\RR^n,W)$ is irreducible, which in particular implies that $h_{k,\ell}\neq 0$ happens for exactly one $\ell$ in each degree $k$ and the representation $h_{k,\ell}$ is irreducible. In particular, $H^0(\RR^n,W)=W_0$ always holds and $W_0$ always is an irreducible representation of $SL(n+1,\RR)$. Also the orders of the operators in the BGG sequence can be easily determined from representation theory data without going into any details of the construction. Finally, it turns out that irreducible representations of $SL(n+1,\RR)$ can be parametrized by pairs $(W_0,r)$, where $W_0$ is an irreducible representation of $SL(n,\RR)$ and $r\geq 0$ is an integer. The representation $W$ corresponding to the pair $(W_0,r)$ then has the property that $W_0$ is the first space in its decomposition, so the first operator in the BGG sequence is defined on $C^\infty(U,W_0)$ and that the first BGG operator has order $r$, see \cite{BCEG}. In particular, taking from $W_0:=\RR$ we obtain the de Rham complex for $r=0$ and the Hessian complex for $r=1$ and there are complexes for $r\geq 2$, which have the $(r+1)$-st derivative of a function as their first operator. It turns out that for these examples $W=S^r\RR^{(n+1)*}$, they are discussed in Section 2.4 of \cite{Cap-Hu}. 

At this point it may be rather surprising why Kostant was interested in determining the cohomology spaces $H^*(\RR^n,W)$ considered above. The reason lies in another unexpected relation of these cohomologies to geometric or analytic problems, which requires passing to complex Lie groups and Lie algebras. The example relevant for our case is the group $G:=SL(n+1,\CC)$ which obviously acts transitively on the space $\CC P^n$ of complex lines in $\CC^{n+1}$. This is an example of a compact homogeneous space of a complex simple Lie group, which generalizes to the remarkable class of so-called \textit{generalized flag manifolds}. The stabilizer $P$ of the line spanned by the first basis vector in $\CC^{n+1}$ is the group of block-upper-triangular matrices of the form $\begin{pmatrix} \det(A)^{-1} & Z \\ 0 & A\end{pmatrix}$ with $A\in GL(n,\CC)$ and $Z\in\CC^{n*}$. This is a simple example of a \textit{parabolic subgroup} in a simple Lie group and $\CC P^n\cong G/P$. There is a standard construction which associates to a representation of $P$ on a vector space $E$ a vector bundle $G\times_PE\to G/P$ with typical fiber $E$ together with an action of $G$ on the total space that lifts the natural action on $G/P$ (a \textit{homogeneous vector bundle}).

For the group $P$, complete reducibility does not hold, but still one can consider the case that $E$ is irreducible. It turns out that such representations are exactly irreducible representations of the block diagonal part with trivial action of the $Z$-part. For the complex Lie group $GL(n,\CC)$, there is the concept of holomorphic representations on complex vector spaces, and these correspond to real representations of $GL(n,\RR)$. If $E$ is a holomorphic representation, then $G\times_PE$ is a holomorphic vector bundle over a complex manifold, so the concept of holomorphic sections makes sense. Moreover, spaces of sections of a homogeneous vector bundle automatically carry a representation of the group $G$. Compactness of $G/P$ easily implies that the space of global holomorphic sections of $G\times_PE$ is finite dimensional, so one obtains a finite dimensional representation of $G$ in this way, which in a special case is described by the \textit{Borel-Weil theorem}. This result was extended by R.\ Bott to a description of the sheaf cohomology of the sheaf of local holomorphic sections of the homogeneous vector bundle $E$ (which may be non-trivial in cases where there are no global holomorphic sections). This is referred to as the \textit{Bott-Borel-Weil theorem}. 

Kostant's theorem was motivated by finding an alternative proof for this result. The basic idea for this is that there is a general approach to computing this type of sheaf cohomology via Dolbeault cohomology. There is a standard complex for computing this sheaf cohomology starting with smooth sections of the bundles and continuing with so-called $(0,k)$-forms with values in $G\times_PE$ and the Dolbeault differential is a component of the exterior derivative. Using a some representation theory of compact groups (the so-called Peter-Weyl theorem), the Bott-Borel-Weil theorem can be deduced (in our case) from the knowledge of $H^*(\RR^n,E)$. 

The BGG construction outlined in this course can be adapted to the case of generalized flag manifolds. This on the one hand connects to the original work of Bernstein-Gelfand-Gelfand, which led to the name of the construction, on the other hand it is the basis for the BGG construction in the setting of parabolic geometries, which can be viewed as ``curved analogs'' of generalized flag manifolds. Given an irreducible representation $W$ of $G=SL(n+1,\RR)$ with the decomposition $W=W_0\oplus \dots\oplus W_N$ as discussed above, the subspace $W_\ell$ are not $P$ invariant, but the action of $P$ maps each $W_\ell$ to $\oplus_{j\geq\ell}W_j$. Consequently, each of the subspaces $W^i:=\oplus_{j\geq i}W_j$ is $P$-invariant and we can view $W_0$ more conceptually, as $W/W^1$. It is rather easy to show that the homogeneous vector bundle $G\times_PW$ can be canonically trivialized, which in turn defines a flat connection $\nabla$ on this bundle. (This is the model for tractor bundles, and this connection has a tensorial part related to a Lie algebra cohomology differential built into its construction.) This flat connection can be used to define a twisted de Rham complex of differential forms with values in $G\times_PW$. Now there is a construction of analogs of the maps $t$ used in our construction based on representation theory. This uses a Lie algebra homology differential which is dual to the Lie algebra cohomology differential we discussed. This differential is even $P$-equivariant and thus provides tensorial operators 
$$
\partial^*:\Omega^k(G/P,G\times_PW)\to\Omega^{k-1}(G/P,G\times_PW).
$$
One can run the BGG construction based on these operations,  which leads to higher order operators acting on the bundles $\mathcal{N}(\partial^*)/\mathcal{R}(\partial^*)$. These bundles are associated to Lie algebra homology spaces that agree with the cohomology groups computed by Kostant's theorem. In particular, in degree zero, we obtain $G\times_PW_0$ as the homology bundle, and the BGG complex computes the same cohomology as the twisted de Rham complex. It should be remarked here, that the global topology of generalized flag manifolds (including their cohomology groups) is rather complicated and can be best understood using representation theory. 

The construction implies that the cohomology bundles are homogeneous vector bundles and hence the spaces of sections of these bundles form infinite dimensional representations of $G$ (which can be complete to representations on Hilbert spaces if needed). Naturality of the BGG construction implies that the BGG operators are equivariant and thus define intertwining operators between these infinite dimensional representations, which provides another connection to representation theory. While the study of  infinite dimensional representations and intertwining operators between them usually needs a lot of functional analysis and intertwiners are non-local operators in general, there is a different approach to the study of intertwining differential operators which a local operators. Via a duality, they are related to homomorphisms of so-called \textit{generalized Verma modules}. These form a class of infinite dimensional representations that can be studied by purely algebraic methods. Without going deeply into this theory, one obtains strong restrictions on the existence of intertwining operators, which together with Kostant's theorem imply that most of them occur in BGG sequences. This is relevant for differential geometry, since for some of the generalized flag manifolds, the group $G$ is the automorphism group of a geometric structure on $G/P$, so one e.g.\ obtains restrictions on existence of conformally invariant differential operators.  

The relation to representation theory goes a bit further, namely to the results of Berstein-Gelfand-Gelfand in \cites{BGG1,BGG2} and their generalization to J.\ Lepowsky in \cite{Lepowsky}. These are existence results on homomorphisms between Verma modules (which are less relevant to geometry) respectively generalized Verma modules. In particular, they show (in the respective setting), that this leads to a resolution of any finite dimensional irreducible representation $W$ of $G$ by homomorphisms of (generalized) Verma modules, which are dual to the BGG complexes of differential operators as constructed above. While there are some applications of these results to differential geometry, they are not needed in the construction of the differential operator version of the resolutions or its generalization to curved geometries.


\begin{bibdiv}
    \begin{biblist}

    \bib{Adams}{book}{
   author={Adams, J. Frank},
   title={Lectures on Lie groups},
   publisher={W. A. Benjamin, Inc., New York-Amsterdam},
   date={1969},
   pages={xii+182},
   review={\MR{0252560}},
}

\bib{Baker}{book}{
   author={Baker, Andrew},
   title={Matrix groups},
   series={Springer Undergraduate Mathematics Series},
   note={An introduction to Lie group theory},
   publisher={Springer-Verlag London, Ltd., London},
   date={2002},
   pages={xii+330},
   isbn={1-85233-470-3},
   review={\MR{1869885}},
   doi={10.1007/978-1-4471-0183-3},
}

\bib{BGG1}{article}{
   author={Bern\v ste\u in, I. N.},
   author={Gel\cprime fand, I. M.},
   author={Gel\cprime fand, S. I.},
   title={Structure of representations that are generated by vectors of
   highest weight},
   language={Russian},
   journal={Funkcional. Anal. i Prilo\v zen.},
   volume={5},
   date={1971},
   number={1},
   pages={1--9},
   issn={0374-1990},
   review={\MR{0291204}},
}

\bib{BGG2}{article}{
   author={Bern\v ste\u in, I. N.},
   author={Gel\cprime fand, I. M.},
   author={Gel\cprime fand, S. I.},
   title={Differential operators on the base affine space and a study of
   ${\germ g}$-modules},
   conference={
      title={Lie groups and their representations},
      address={Proc. Summer School, Bolyai J\'anos Math. Soc., Budapest},
      date={1971},
   },
   book={
      publisher={Halsted Press, New York-Toronto, Ont.},
   },
   date={1975},
   pages={21--64},
   review={\MR{0578996}},
}

\bib{BCEG}{article}{
   author={Branson, Thomas},
   author={\v Cap, Andreas},
   author={Eastwood, Michael},
   author={Gover, A. Rod},
   title={Prolongations of geometric overdetermined systems},
   journal={Internat. J. Math.},
   volume={17},
   date={2006},
   number={6},
   pages={641--664},
   issn={0129-167X},
   review={\MR{2246885}},
   doi={10.1142/S0129167X06003655},
}

    \bib{Cap-Hu}{article}{
   author={\v Cap, Andreas},
   author={Hu, Kaibo},
   title={BGG sequences with weak regularity and applications},
   journal={Found. Comput. Math.},
   volume={24},
   date={2024},
   number={4},
   pages={1145--1184},
   issn={1615-3375},
   review={\MR{4783639}},
   doi={10.1007/s10208-023-09608-9},
}

\bib{Cap}{misc}{
 author={\v Cap, Andreas},
 title={A Riemannian version of the BGG construction},
 note={Talk at the Workshop ``Theory of Differential Complexes and Related Models'' in the ESI program, video available via https://www.youtube.com/watch?v=SB3--rak290 , slides available as https://www.mat.univie.ac.at/~cap/files/ESI26-beamer.pdf . }
}
    
\bib{Fulton-Harris}{book}{
   author={Fulton, William},
   author={Harris, Joe},
   title={Representation theory},
   series={Graduate Texts in Mathematics},
   volume={129},
   note={A first course;
   Readings in Mathematics},
   publisher={Springer-Verlag, New York},
   date={1991},
   pages={xvi+551},
   isbn={0-387-97527-6},
   isbn={0-387-97495-4},
   review={\MR{1153249}},
   doi={10.1007/978-1-4612-0979-9},
}

\bib{Knapp}{book}{
   author={Knapp, Anthony W.},
   title={Lie groups beyond an introduction},
   series={Progress in Mathematics},
   volume={140},
   edition={2},
   publisher={Birkh\"auser Boston, Inc., Boston, MA},
   date={2002},
   pages={xviii+812},
   isbn={0-8176-4259-5},
   review={\MR{1920389}},
}

\bib{KN}{book}{
   author={Kobayashi, Shoshichi},
   author={Nomizu, Katsumi},
   title={Foundations of differential geometry. Vol I},
   publisher={Interscience Publishers (a division of John Wiley \& Sons,
   Inc.), New York-London},
   date={1963},
   pages={xi+329},
   review={\MR{0152974}},
}

\bib{Kostant}{article}{
   author={Kostant, Bertram},
   title={Lie algebra cohomology and the generalized Borel-Weil theorem},
   journal={Ann. of Math. (2)},
   volume={74},
   date={1961},
   pages={329--387},
   issn={0003-486X},
   review={\MR{0142696}},
   doi={10.2307/1970237},
}

  \bib{Lee}{book}{
   author={Lee, John M.},
   title={Introduction to smooth manifolds},
   series={Graduate Texts in Mathematics},
   volume={218},
   edition={2},
   publisher={Springer, New York},
   date={2013},
   pages={xvi+708},
   isbn={978-1-4419-9981-8},
   review={\MR{2954043}},
}

\bib{Lepowsky}{article}{
   author={Lepowsky, J.},
   title={A generalization of the Bernstein-Gelfand-Gelfand resolution},
   journal={J. Algebra},
   volume={49},
   date={1977},
   number={2},
   pages={496--511},
   issn={0021-8693},
   review={\MR{0476813}},
   doi={10.1016/0021-8693(77)90254-X},
}

\bib{Tapp}{book}{
   author={Tapp, Kristopher},
   title={Matrix groups for undergraduates},
   series={Student Mathematical Library},
   volume={79},
   edition={2},
   publisher={American Mathematical Society, Providence, RI},
   date={2016},
   pages={viii+239},
   isbn={978-1-4704-2722-1},
   review={\MR{3468869}},
   doi={10.1090/stml/079},
}

    \end{biblist}
\end{bibdiv}
\end{document}